\numberwithin{equation}{section}
\newtheorem{theorem}{Theorem}[section]
\newtheorem{corollary}[theorem]{Corollary}
\newtheorem{lemma}[theorem]{Lemma}
\newtheorem{proposition}[theorem]{Proposition}
\theoremstyle{definition}
\newtheorem{definition}{Definition}
\theoremstyle{remark}
\newtheorem{remark}[theorem]{Remark}
\newcommand{\R}{{\mathbb{R}}}
\newcommand{\X}{{\mathbb{R}^d}}
\newcommand{\B}{{\mathcal{B}}}
\newcommand{\Bc}{{\B_\mathrm{c}}}
\newcommand{\E}{{\mathbb{E}}}
\newcommand{\N}{\mathbb{N}}
\newcommand{\eps}{\varepsilon}
\newcommand{\la}{\lambda}
\newcommand{\La}{\Lambda}
\newcommand{\ga}{\gamma}
\newcommand{\Ga}{\Gamma}
\newcommand{\kap}{{\varkappa^+}}
\newcommand{\kam}{{\varkappa^-}}
\newcommand{\kapm}{{\varkappa^\pm}}
\newcommand{\dt}{\dfrac{d}{d t}}
\newcommand{\pdt}{\dfrac{\partial}{\partial t}}
\newcommand{\sgn}{\,\mathrm{sgn}}
\newcommand{\mcr}{m_\mathrm{cr}}
\newcommand{\const}{\mathrm{const}\,}
\newcommand{\1}{1\!\!1}
\let\hat=\widehat
\title{Extinction threshold in spatial stochastic logistic model: Space homogeneous case}
\author{Dmitri Finkelshtein\thanks{Department of Mathematics,
Swansea University, Bay Campus, Fabian Way, Swansea SA1 8EN, U.K. ({\tt d.l.finkelshtein@swansea.ac.uk}).}}
\begin{document}

\maketitle

\begin{abstract}
We consider the extinction regime in the spatial stochastic logistic model in $\X$ (a.k.a. Bolker--Pacala--Dieckmann--Law model of spatial populations) using the first-order perturbation beyond the mean-field equation. In~space homogeneous case (i.e. when the density is non-spatial and the covariance is translation invariant), we show that the perturbation converges as time tends to infinity; that yields the first-order approximation for the stationary density. Next, we study the critical mortality---the smallest constant death rate which ensures the extinction of the population---as a~function of the mean-field scaling parameter $\eps>0$. We find the leading term of the asymptotic expansion (as $\eps\to0$) of the critical mortality which is apparently different for the cases $d\geq3$, $d=2$, and $d=1$.

\textbf{Keywords:} extinction threshold, spatial logistic model, mean-field equation, population density, perturbation, correlation function, asymptotic behaviour 

\textbf{2010 Mathematics Subject Classification:} 34E10, 92D25, 34D10, 58C15    
\end{abstract}

\section{Introduction}

The spatial stochastic logistic model was introduced in 1997 by B.\,Bolker and S.\,W.\,Pacala, \cite{BP1997}, and it has had a continual interest since then in both population ecology, e.g. \cite{MDL2004,LMD2003,DL2000,LD2000,OC2006}, and (pure) mathematics, e.g. \cite{FM2004,Eth2004,FKK2009,KK2016,SW2015,BKK2019,BMW2014,BMW2018}. The model describes spatial branching of individuals in a population with a density dependent death rate. We consider it in the following notations. 

We fix $m>0$, the \emph{mortality} constant, and two functions, $a_\eps^+$ and $a_\eps^-$, the \emph{dispersion} and the \emph{competition} kernels, respectively. Here $\eps>0$ is an artificial scaling parameter:
\begin{equation}\label{eq:aeps}
	a^\pm_\eps(x):=\eps^d a^\pm(\eps x), \qquad x\in\X,
\end{equation}
where $d\geq1$ and $a^\pm$ are fixed nonnegative integrable functions on $\X$, which are assumed to be non-degenerate: 
\begin{equation}\label{eq:nondegen}
\varkappa^\pm:=\int_\X a^\pm(x)\,dx=\int_\X a_\eps^\pm(x)\,dx >0, \qquad \eps>0.
\end{equation}

Let $\ga_{t,\eps}\subset\X$ denote a discrete random set of positions of individuals at a moment of time $t\geq0$. The set may be finite or locally finite (the latter means that it has a finite number of points in each compact set from~$\X$). For an infinitesimally small $\delta>0$,  there happens, with the probability $1-o(\delta)$, exactly one out of two possible events within the time-interval $[t,t+\delta)$: either the individual placed at an $x\in\ga_{t,\eps}$ sends an off-spring to an area $\Lambda\subset\X$ with the probability
\[
	\delta \,  \int_\La a_\eps^+(x-y)\,dy+o(\delta);
\]
or the individual placed at an $x\in\ga_{t,\eps}$ dies with the probability
\[
\delta  \biggl(m+\sum_{y\in\ga_{t,\eps}\setminus\{x\}} a_\eps^-(x-y)\biggr)   + o(\delta).
\]

In population ecology, one of the fundamental questions relates to the persistence of populations, or conversely to the possibility of their extinction. The~latter can be defined 
through the equation
\begin{equation}\label{eq:extinction}
\lim_{t\to\infty}k_{t,\eps}(x)=0, \quad x\in\X
\end{equation}
(we write henceforth $x\in\X$ instead of `for a.a. $x\in\X$'),
where $k_{t,\eps}(x)\geq0$ denotes the local population density given through the equality
\begin{equation}\label{eq:kt1def}
\E \bigl[|\ga_{t,\eps}\cap\La|\bigr] = \int_\La k_{t,\eps}(x)\,dx
\end{equation}
which should hold for each compact $\Lambda\subset\X$. Henceforth, $\E[\zeta]$ denotes the expected value of a random variable $\zeta$ (with respect to the distribution of~$\ga_{t,\eps}$), and $|\eta|$ denotes number of points in a finite subset $\eta\subset\X$.

It can be shown, see Section~\ref{sec:prelim} below for details, that, for $\eps\to0$,
\begin{equation}\label{eq:meanfielddens}
k_{t,\eps}(x)=q_t(\eps x)+o(1),
\end{equation}
where $q_t(x)\geq 0$ solves the so-called \emph{mean-field}, or \emph{kinetic}, nonlinear equation, see \eqref{eq:fkpp} below. Moreover, 
\[
\lim_{t\to \infty} q_t(\eps x)=0, \quad x\in\X, \ \eps>0,
\]
if and only if $m\geq \kap$, cf. \eqref{eq:nondegen}. It is natural to expect, however, that \eqref{eq:extinction} may take place for smaller value of $m$, because of the term $o(1)$ in \eqref{eq:meanfielddens} which naturally depends on $x\in\X$ and $t\geq0$. To discuss this, one needs the next term of the expansion \eqref{eq:meanfielddens}, using the approach considered in \cite{OFKCBK2014, OC2006, CSFSO2019}. It~yields that, for $\eps\to0$,
\begin{equation}\label{eq:beyondmeanfielddens}
k_{t,\eps}(x)=q_t (\eps x)+\eps^d p_t (\eps x)+o(\eps^d),
\end{equation}
where $p_t(x)$ can be obtained from a coupled system of linear nonhomogeneous and nonautonomous equations \eqref{eq:eqnptx}--\eqref{eq:eqngtx} (see Section~\ref{sec:prelim} for details). 

In the present paper, we consider the space homogeneous regime, when $k_{t,\eps}, q_t, p_t$ do not depend on the space variable. Then both $q_t$ and $p_t$ satisfy ordinary differential equations \eqref{eq:logistic}, \eqref{eq:ptF}, respectively, with 
\[
\lim_{t\to\infty} q_t=\frac{\kap- m}{\kam}=:q^*>0,
\]
for $m<\kap$. The limit $p^*:=\lim\limits_{t\to\infty} p_t$ is found in Theorem~\ref{thm:limits} below. Assuming that that the last term in \eqref{eq:beyondmeanfielddens} also has a limit as $t\to\infty$ of the same order of $\eps$, we get that the extinction, in the space homogeneous case, takes place iff
\begin{equation}\label{eq:exteqnintro}
q^*+\eps^d p^*+o(\eps^d)=0.
\end{equation}

Note that the conditions we imposed typically lead to $p^*<0$, that explains why \eqref{eq:exteqnintro} should take place for $m<\kap$. To formalise this, we replace $m$ by $m(\eps)<\kap$ and reveal the asymptotics of $m(\eps)$ from \eqref{eq:exteqnintro}. We show that (Theorems~\ref{thm:assympd3}, \ref{thm:assympd2}, \ref{thm:assympd1}),
\begin{equation}\label{eq:mainresult}
q^*(\eps):=\frac{\kap-m(\eps)}{\kam}=\begin{cases}
\la_3 \eps^d +o(\eps^d), & d\geq3,\\[2mm]
\la_2 \eps^2 W(\eps^{-2})+ o\bigl(\eps^2 W(\eps^{-2})\bigr), & d=2,\\[2mm]
\la_1 \eps^{\frac{2}{3}}+o\bigl( \eps^{\frac{2}{3}}\bigr), & d=1,
\end{cases}
\end{equation}
where $\la_3,\la_2,\la_1$ are explicit positive constants dependent on $a^+$ and $a^-$. Here $W(x)$ denotes the Lambert W function that solves $W(x)e^{W(x)}=x$ for $x\geq0$; using its known asymptotics we also get that, for the case $d=2$,
\[
\frac{\kap-m(\eps)}{\kam}=-2 \la_2\eps^2\log\eps +o(\eps^2\log\eps).
\]

In other words, we show that the mortality needed to ensure that the population (statistically) will extinct as time tends to infinity is less than $\kap$, namely,
\[
	m(\eps)=\kap-\kam q^*(\eps),
\]  
where $q^*(\eps)>0$ is given by \eqref{eq:mainresult}.

It is worth noting that the orders of the leading terms in the asymptotics \eqref{eq:mainresult} coincide, for all $d\geq1$, with the asymptotics of the critical branching parameter for a lattice contact model considered in \cite{Dur1999,DP1999,BDS1989}, where $\eps$ was the mesh size of the lattice. We expect to discuss a connection between two models as well as to consider the space non-homogeneous case in forthcoming papers. 

The paper is organised as follows. In Section~\ref{sec:prelim}, we describe further details about the spatial and stochastic logistic model, and discuss how one can derive equations on $q_t$ and $p_t$. In~Section~\ref{sec:sphomcase}, we explain the specific of the space-homogeneous case and prove the existence of the limit $p^*=\lim\limits_{t\to\infty}p_t$. In~Section~\ref{sec:critmort}, we introduce $m(\eps)$ and discuss the limits of $q^*(\eps)$ and $p^*(\eps)$ depending on the dimension $d$. Finally, in~Sections~\ref{sec:asympd3}--\ref{sec:asympd1} we find the asymptotics \eqref{eq:mainresult} of $q^*(\eps)$ (and hence of $\mcr(\eps)$) for $d\geq3$, $d=2$, and $d=1$, respectively.

\section{Spatial and stochastic logistic model}\label{sec:prelim}

We consider dynamics of a system consisting of indistinguishable individuals. Each individual is fully characterized by its position $x\in\X$, $d\geq1$. We will always assume that there are not two or more individuals at the same position. 

Let $\Bc(\X)$ denote the set of all Borel subsets of $\X$ with compact closure.
We will consider discrete systems only, finite or locally finite. The latter means that, if $\ga_{t,\eps}=\{x\}$ is a system of individuals at some moment of time $t\geq0$, then we assume that 
$|\ga_{t,\eps}\cap\La|<\infty$ for all $\La\in\Bc(\X)$. In particular, of course, a finite $\ga_{t,\eps}$ is also locally finite. We will call such $\ga_{t,\eps}$ a (finite or locally finite) \emph{configuration}.

The individuals of a configuration are \emph{random}, hence we will speak about random configurations $\ga_{t,\eps}$ with respect to (w.r.t. henceforth) a probability distribution. Let $\Ga$ denote the space of locally finite configurations. We fix the $\sigma$-algebra $\B(\Ga)$ on $\Ga$ generated by all mapppings $\Ga\ni\ga\mapsto |\ga\cap\La|\in\N_0:=\N\cup\{0\}$, $\la\in\Bc(\X)$.

The dynamics of configurations in time $t$ is defined through the dynamics of their distributions. Heuristically, the scheme is as follows. We consider, for an $\eps\in(0,1)$, a mapping on measurable functions $F:\Ga\to\R$ given by
\begin{align}\notag
(L_\eps F)(\ga)&=\sum_{x\in\ga} \biggl( m+\sum_{y\in\ga\setminus\{x\}} a_\eps^-(x-y)\biggr)\Bigl(F \bigl(\ga\setminus\{x\}\bigr) - F\bigl(\ga\bigr)\Bigr)\\
&\quad +\sum_{x\in\ga}\int_\X a_\eps^+(x-y) \Bigl(F \bigl(\ga\cup\{y\}\bigr) - F\bigl(\ga\bigr)\Bigr)dy.\label{eq:genL}
\end{align}
Recall that $m>0$ is a constant and functions $a_\eps^\pm$ are defined through \eqref{eq:aeps}, where $0\leq a^+,a^-\in L^1(\X)$ and \eqref{eq:nondegen} holds.

Operator \eqref{eq:genL} has two properties: 1) $L_\eps 1=0$ and 2)~if, for a given function $F$, a configuration $\ga^* $ is such that $F(\ga^* )\geq F(\ga)$ for all $\ga\in\Ga$ (i.e. if $\ga^* $ is a global maximum for $F$), then $(L_\eps F)(\ga^* )\leq 0$. Hence, formally, $L_\eps $ is a \emph{Markov generator}.

The dynamics of $\ga_{t,\eps}$ if defined then through the differential equation:
\begin{equation}\label{eq:stochdyn}
\dt \E\bigl[F(\ga_{t,\eps})\bigr] = \E\bigl[(L_\eps F)(\ga_{t,\eps})\bigr]
\end{equation}
which should be satisfied for a large class of functions $F$. 

\begin{definition}
A function $k_{t,\eps}:\X\to\R_+:=[0,\infty)$ is said to be \textit{the first order correlation function} (for the distribution of $\ga_{t,\eps}$), if for any function $g(x)\geq0$,
\begin{equation}\label{eq:cfi}
	\E \Bigl[ \sum_{x\in\ga_{t,\eps}} g(x)\Bigr]=\int_\X g(x) k_{t,\eps}(x)\,dx.
\end{equation}
\end{definition}

The function $k_{t,\eps}(x)$ is also called \textit{the density} of individuals of the configuration $\ga_{t,\eps}$, since, taking $g(x)=\1_\La(x)$ for a $\La\in\Bc(\X)$,
we get from \eqref{eq:cfi} that \eqref{eq:kt1def} holds.

\begin{definition}
A symmetric function $k_{t,\eps}^{(2)}:(\X)^2\to\R_+$ is called  \textit{the second-order correlation function}, if, for any symmetric function $g_2:(\X)^2\to\R_+$,
\begin{equation}\label{eq:cfij}
	\E \Bigl[ \sum_{\substack{x \in\ga_{t,\eps}\\ y \in\ga_{t,\eps}\\ x \neq y }} g_2(x ,y )\Bigr]=\int_\X\int_\X g_2(x ,y ) k_{t,\eps}^{(2)}(x ,y )\,dx dy .
\end{equation}
\end{definition}

Combining \eqref{eq:cfij} with \eqref{eq:cfi}, we can also write,
\begin{align}\notag
	\E \Bigl[ \sum_{\substack{x \in \ga_{t,\eps}\\y \in \ga_{t,\eps}}} g_2(x ,y )\Bigr]&=\int_\X\int_\X g_2(x ,y ) k_{t,\eps}^{(2)}(x ,y )\,dx dy \\&\quad +
	\int_\X g_2(x,x) k_{t,\eps}(x)\,dx.\label{eq:cfijgen}
	\end{align}
Substituting to \eqref{eq:cfijgen} the symmetric function
\begin{equation*}\label{eq:specg2}
g_2(x ,y )=\frac{1}{2}\Bigl(\1_{\La_1}(x )\1_{\La_2}(y )+\1_{\La_1}(y )\1_{\La_2}(x )\Bigr),
\end{equation*}
where $\La_1,\La_2\in\Bc(\X)$, we get
\begin{align*}
\E \bigl[|\ga_{t,\eps}\cap\La_1|\,
|\ga_{t,\eps}\cap\La_2|\bigr] &= \int_{\La_1} \int_{\La_2} k_{t,\eps}^{(2)}(x ,y )\,dx dy  +
\int_{\La_1\cap\La_2} k_{t,\eps} (x)\,dx,
\end{align*}
and hence the \emph{covariance} between random numbers $|\ga_{t,\eps}\cap\La_1|$ and $|\ga_{t,\eps}\cap\La_2|$ is given by
\begin{align}
&\E \Bigl[\Bigl(|\ga_{t,\eps}\cap\La_1|-\E\bigl[|\ga_{t,\eps}\cap\La_1|\bigr]\Bigr)\,
\Bigl(|\ga_{t,\eps}\cap\La_2|-\E\bigl[|\ga_{t,\eps}\cap\La_2|\bigr]\Bigr)\Bigr] \label{eq:covar}\\
&= \E \bigl[|\ga_{t,\eps}\cap\La_1|\,
|\ga_{t,\eps}\cap\La_2|\bigr]-\E \bigl[|\ga_{t,\eps}\cap\La_1|\bigr]\,\E \bigl[
|\ga_{t,\eps} \cap\La_2|\bigr]\notag\\
&= \int_{\La_1} \int_{\La_2} \Bigl(k_{t,\eps}^{(2)} (x ,y )-k_{t,\eps} (x )\,k_{t,\eps} (y )\Bigr)\,dx dy   + 
\int_{\La_1\cap\La_2} k_{t,\eps}  (x)\,dx.\label{eq:covarans}
\end{align}

Substituting \eqref{eq:genL} into \eqref{eq:stochdyn} and using \eqref{eq:cfi} and \eqref{eq:cfij}, we obtain that $k_{t,\eps}(x)$ satisfies the following equation 
\begin{align*}
 \pdt k_{t,\eps}(x)&=\int_\X a_\eps^+(x-y)k_{t,\eps}(y)\,dy - m k_{t,\eps}(x)\notag\\
&\quad - \int_\X a_\eps^-(x-y)k_{t,\eps}^{(2)}(x,y)\,dy,
 \end{align*}
see  e.g. \cite{FKK2009,FKK2011a} for details. Similarly, the evolution of $k_{t,\eps}^{(2)}(x,y)$ depends on the third order correlation function and so on. 

It can be shown, see \cite{FM2004,FKK2011a,FKKozK2014}, that then, for $\eps\to0$,
\begin{equation}\label{eq:meanfield}
\begin{aligned}
k_{t,\eps}(x)&=q_t(\eps x)+o(1), \\
k_{t,\eps}^{(2)}(x ,y )&=q_t(\eps x )q_t(\eps y )+o(1),\end{aligned}
\end{equation}
where $q_t $ solves the following \emph{mean-field}, or \emph{kinetic}, equation 
\begin{align}
\pdt q_t (x) &=\int_\X a^+(x-y)q_t(y)\,dy - m q_t(x)\notag\\
&\quad - q_t(x) \int_\X a^-(x-y)q_t(y)\,dy.\label{eq:fkpp}
\end{align}
Note that it was shown using another scaling, which apparently is equivalent to the considered one, see \cite{OFKCBK2014} for details. For various properties of solutions to \eqref{eq:fkpp}, see \cite{FT2017c,FKT100-1,FKT100-2,FKT100-3,FKMT2017,KT2019,FKT2019md}.

The asymptotics \eqref{eq:meanfield} however does not describe effectively the covariance \eqref{eq:covar} between 
random numbers $|\ga_{t,\eps}\cap\La_1|$ and $|\ga_{t,\eps}\cap\La_2|$, especially in the case of disjoint $\La_1,\La_2\in\Bc(\X)$, since then, by \eqref{eq:covarans} and \eqref{eq:meanfield},
\[
\E \Bigl[\Bigl(|\ga_{t,\eps}\cap\La_1|-\E\bigl[|\ga_{t,\eps}\cap\La_1|\bigr]\Bigr)\,
\Bigl(|\ga_{t,\eps}\cap\La_2|-\E\bigl[|\ga_{t,\eps}\cap\La_2|\bigr]\Bigr)\Bigr] = o(1).
\]

To partially reveal the covariance above, one needs hence an enhanced asymptotics \eqref{eq:meanfield}. A mathematical approach for this was proposed in \cite{OFKCBK2014}, justifying the heuristic considerations in the early publication \cite{OC2006}; the approach has been recently generalised in \cite{CSFSO2019}. Namely, it was shown that
\begin{equation}\label{eq:beyondmeanfield}
\begin{aligned}
k_{t,\eps}(x)&=q_t (\eps x)+\eps^d p_t (\eps x)+o(\eps^d),\\
k_{t,\eps}^{(2)} (x ,y )&=q_t (\eps x )q_t (\eps y )+\eps^d g_t  (\eps x ,\eps y )+o(\eps^d),
\end{aligned}
\end{equation}
where
\begin{align}
\pdt  p_t(x) &=\int_\X   a^+    (x- y)  p_t(y) \,dy-m  p_t(x) - q_t(x)\int_\X   a^-   (x-y)  p_t(y) \,dy\notag \\& \ \ -  p_t(x) \int_\X  a^-   (x-y)  q_t(y) \,dy -\int_\X g_t(x, y) a^-   (x-y)\,dy; \label{eq:eqnptx}
\end{align}
and
\begin{align}
\pdt g_t(x,y)&=
\int_\X   [ g_t(x,z) a^+    (y- z)+ g_t(z,y) a^+    (x-z)]\,dz \notag -2mg_t(x,y) \\&\ \ 
-g_t(x,y)\int_\X  [ a^-   (x-z)+ a^-   (y-z)] q_t(z)\,dz
\notag \\ &\ \   + a^+    (x-y)[ q_t(x) + q_t(y) ] -2 a^-   (y-x) q_t(y)  q_t(x) \notag \\
&\ \  - \int_\X   [ a^-   (x-z) q_t(x) g_t(z,y)+ a^-   (y-z) q_t(y) g_t(x, z)]\,dz .
\label{eq:eqngtx} 
\end{align}

\section{Space-homogeneous case}\label{sec:sphomcase}

Let henceforth, for an integrable function $f$ on $\X$, $\hat{f}$ denote its \emph{unitary} Fourier transform given by
\begin{equation}\label{eq:fourier}
\hat{f}(\xi):=\int_\X f(x) e^{- 2 i \pi x\cdot \xi}\,d\xi,
\end{equation}
where $x\cdot \xi$ denotes the standard dot-product in $\X$. 
Note that
\begin{equation}\label{eq:fourest}
|\hat{f}(\xi)|\leq \int_\X |f(x)|\,dx, \qquad \xi\in\X. 
\end{equation}

We formulate now our basic assumptions on the kernels $a^\pm:\X\to [0,\infty)$:
\begin{equation}\tag{\textbf{A1}}\label{eq:newA1}
\begin{gathered}
a^\pm\in L^1(\X)\cap L^\infty(\X); \qquad \hat{a}^\pm\in L^1(\X)\\
a^\pm(-x)=a^\pm(x),\quad x\in\X.
\end{gathered}
\end{equation}
Note that \eqref{eq:newA1}, together with \eqref{eq:fourest}, imply that $a^\pm,\hat{a}^\pm\in L^2(\X)$.
It is also well-known that $\hat{a}^\pm$ are (uniformly) continuous functions on $\X$.

Equation \eqref{eq:fkpp} has two constant stationary solutions $q_t(x)=0$ and $q_t(x)=q^*$, where
\begin{equation}\label{eq:theta}
q^*:=\frac{\kap -m}{\kam }.
\end{equation}
We will always assume that
\begin{equation}\label{eq:nondeg}
\kap >m, \tag{\textbf{A2}}
\end{equation}
i.e. that $ q^*  >0$; otherwise, the solution to \eqref{eq:fkpp} with $q_0(x)\geq0$, $x\in\X$, would uniformly degenerate as $t\to\infty$. We assume also that 
\begin{equation}\label{eq:compar}
J^* (x):= a^+(x)- q^*   a^-(x)\geq 0, \qquad x\in\X.\tag{\textbf{A3}}
\end{equation}
The reason for this restriction is as follows. By \eqref{eq:fourest},
assumption \eqref{eq:compar} yields
\begin{equation}
|\hat{J}^* (\xi)|\leq \int_\X |J^* (x)|\,dx
	=\int_\X J^* (x)\,dx =\kap  - q^* \kam =m. \label{eq:lessm}
\end{equation}
Next $a^\pm(-x)=a^\pm(x)$ for $x\in\X$ implies that $\hat{a}^\pm(\xi)\in\R$ for $\xi\in\X$, and therefore,
\begin{equation}\label{eq:posFT}
	\kap -\hat{J}^* (\xi)\geq \kap -m>0, \qquad \xi\in\X.
\end{equation}
If \eqref{eq:posFT} fails, then (under further assumptions) there exists an infinite family of non-constant (in space) stationary solutions to \eqref{eq:fkpp}, see \cite{KT2019}.

Under assumption \eqref{eq:compar}, if $q_0(x)=q_0$ for all $x\in\X$, then, by \cite[Proposition~2.7]{FKT100-1}, the solution to \eqref{eq:fkpp} is also space homogeneous: $q_t(x)=q_t$, where $q_t$ solves the logistic differential equation
\begin{equation}\label{eq:logistic}
\dt q_t= \kap  q_t - mq_t - \kam  q_t^2=\kam q_t(q^*-q_t).
\end{equation}
It is straightforward to check that then
\begin{equation}\label{eq:logisticsol}
q_t =\frac{ q^*  q_0}{q_0+( q^*  -q_0) e^{-(\kap -m) t}},
\end{equation}
hence 
\begin{equation}\label{eq:limqt}
\lim_{t\to\infty} q_t=q^*.
\end{equation}

We will assume henceforth that
\begin{equation}\label{eq:q0small}
0<q_0< q^* ,
\end{equation}
then, by \eqref{eq:logisticsol},
\begin{equation} \label{eq:qtbelow}
0<q_t< q^*, \qquad t>0.
\end{equation}
Note that then, by \eqref{eq:logisticsol},  $\dt q_t > 0$ for $t>0$, i.e. $q_t$ is (strictly) increasing.

Equations \eqref{eq:eqnptx} and \eqref{eq:eqngtx} are linear, and it is straightforward to check that, in the space-homogeneous case, when $p_0(x)= p_0$, $g_0(x,y)=g_0(x-y)$ for all $x,y\in\X$, this property will be preserved in time, so that \eqref{eq:beyondmeanfield} takes the form
\begin{align}
k_{t,\eps} (x)&=k_{t,\eps} =q_t +\eps^d p_t  +o(\eps^d),\label{eq:beyondmfhom1} \\
k_{t,\eps} (x,y)& = k_{t,\eps} (x-y)= q_t  q_t  + \eps^d g_t (x-y)+o(\eps^d),\notag
\end{align}
where, recall $q_t$ solves \eqref{eq:logistic} and hence is given by \eqref{eq:logisticsol}, and the equations for $p_t,g_t(x)$ take the following form:
\begin{align}
\dt  p_t &= \kap        p_t -m p_t  - 2 \kam     q_t  p_t  -\int_\X g_t(y) a^-   (y)\,dy;\label{eq:ptfirst}\\
\pdt g_t(x)&=2\int_\X    a^+    (x- y) g_t(y)\,dy -2 \kam  q_t  g_t(x) -2mg_t(x)\notag\\&\quad   +2 a^+    (x) q_t-2 a^-   (x) q_t ^2-2 q_t  \int_\X    a^-   (x- y) g_t(y)\,dy.\label{eq:gtfirst}
\end{align}

For $q_0\in(0,q^*)$, \eqref{eq:qtbelow} holds, and hence
\begin{equation}\label{eq:defJt}
J_t(x):=a^+(x)-q_t a^-(x)> J^* (x)\geq0.
\end{equation}
One can rewrite then \eqref{eq:gtfirst}:
\begin{equation}\label{eq:gt}
\pdt g_t(x)=2\int_\X    J_t    (x- y)  g_t(y)\,dy
 -2 (\kam  q_t+m)  g_t(x)  
  +2  q_t J_t (x) .
\end{equation}

It is straightforward to check that if $g_0\in L^1(\X)\cap L^\infty(\X)$, then $g_t\in L^1(\X)\cap L^\infty(\X)$ for all $t>0$. One can apply then the Fourier transform to both parts of \eqref{eq:gt} to get
\begin{equation}\label{eq:gtF}
\pdt \hat{g}_t(\xi) = 2 \bigl(\hat{J}_t(\xi)-\kam  q_t-m\bigr)\hat{g}_t(\xi)+ 2  q_t \hat{J}_t (\xi).
\end{equation}
By the above, $g_t,a^-\in L^2(\X)$, $t\geq0$, and since we have chosen the unitary Fourier transform \eqref{eq:fourier}, we can rewrite \eqref{eq:ptfirst}, by using the Parseval identity, as follows:
\begin{equation}\label{eq:ptF}
\dt  p_t = \kap        p_t -m p_t  - 2 \kam     q_t  p_t  -\int_\X \hat{g}_t(\xi) \hat{a}^-(\xi)\,d\xi.
\end{equation}

We are going to find limits of $\hat g_t$ and $p_t$ as $t\to\infty$. To this end, we prove an abstract lemma which is actually an adaptation of e.g.~\cite[Theorem~5.8.2]{Paz1983} to the case of bounded operators (that apparently requires wicker conditions).

\begin{lemma}\label{le:tobeused}
Let $\bigl(X,\lVert\cdot\rVert_X\bigr)$ be a Banach space, and let $\bigl(\mathcal{L}(X),\lVert\cdot\rVert
\bigr)$ denote the Banach space of linear bounded operators on $X$. Let $A\in C\bigl([0,\infty)\to\mathcal{L}(X)\bigr)$ be a continuous operator-valued function. Suppose that there exists $c,\nu>0$ such that, for all $t\geq s\geq0$, the operator
\[
	U(t,s):=\exp\biggl( \int_s^t A(\tau)\,d\tau \biggr)\in\mathcal{L}(X)
\]
satisfies
\begin{equation}\label{eq:Uexp}
\bigl\lVert U(t,s)\bigr\rVert \leq c e^{-\nu(t-s)}.
\end{equation}
Let $f\in C\bigl([0,\infty),X\bigr)$ be a continuous $X$-valued function. Suppose that $f(t)$ converges in $X$ to some $f(\infty)\in X$ and $A(t)$ strongly converges to some $A(\infty)\in \mathcal{L}(X)$ as $t\to\infty$. 
Finally, suppose that $A(\infty)$ is an invertible operator, i.e. that there exists $A(\infty)^{-1}\in\mathcal{L}(X)$. Then the unique classical solution to the following non-homogeneous Cauchy problem in $X$:
\begin{equation}\label{eq:Cauchy}
\dfrac{d}{dt} u(t) = A(t)u(t)+f(t), \qquad u(0)=u_0\in\X,
\end{equation}
converges in $X$, as $t\to\infty$, to
\[
	-A(\infty)^{-1}f(\infty)\in X.
\]
\end{lemma}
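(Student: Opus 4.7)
The plan is to identify the expected limit from the stationary form of \eqref{eq:Cauchy} and then to control the error by Duhamel's formula combined with the exponential stability \eqref{eq:Uexp}. If $u(t)$ converges to some $u_\infty$ in $X$, the equation forces $A(\infty)u_\infty + f(\infty)=0$; hence the only possible limit is $u^*:= -A(\infty)^{-1}f(\infty)\in X$, which is well-defined by invertibility of $A(\infty)$. I would then change the unknown to $w(t):=u(t)-u^*$, which satisfies
\[
\dt w(t) = A(t)w(t) + h(t), \qquad h(t):=\bigl(A(t)-A(\infty)\bigr)u^* + \bigl(f(t)-f(\infty)\bigr).
\]
Strong convergence $A(t)\to A(\infty)$ evaluated at the fixed vector $u^*$, combined with $f(t)\to f(\infty)$ in $X$, yields $\lVert h(t)\rVert_X\to 0$; continuity of $A(\cdot)$ and $f(\cdot)$ then makes $\lVert h(\cdot)\rVert_X$ bounded on $[0,\infty)$ by some constant $M$.

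Next, I would apply variation of parameters to represent
\[
w(t)=U(t,0)w(0) + \int_0^t U(t,s) h(s)\,ds.
\]
(In the intended applications of the lemma the operators $A(\tau)$ mutually commute, so $U(t,s)$ defined by exponentiation is the two-parameter evolution family; in this regime $\pdt U(t,s)=A(t)U(t,s)$ and $U(s,s)=I$, which validates the Duhamel representation.) The first summand is bounded by $c e^{-\nu t}\lVert w(0)\rVert_X\to 0$. For the integral, I would split $[0,t]$ at $t/2$: on $[0,t/2]$ the kernel is uniformly $O(e^{-\nu t/2})$ while $\lVert h(\cdot)\rVert_X\leq M$, so this piece is $O(t\,e^{-\nu t/2})=o(1)$; on $[t/2,t]$ the tail $\eta(t):=\sup_{s\geq t/2}\lVert h(s)\rVert_X\to 0$ can be pulled out of the norm, and integrating $c e^{-\nu(t-s)}$ yields $O(\eta(t)/\nu)$. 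Both contributions vanish as $t\to\infty$, so $u(t)\to u^*$ in $X$, as required.

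The main technical point is the identification of $U(t,s)$ as the evolution family for \eqref{eq:Cauchy}: the exponential formula produces this only when the operators $A(\tau)$ commute, which is however the regime relevant to the subsequent applications of the lemma to \eqref{eq:gtF} and \eqref{eq:ptF} (multiplication operators in Fourier space). Once this is in place, the remaining steps use only the exponential decay \eqref{eq:Uexp} together with the two convergence assumptions on $A$ and $f$, so no serious obstacle arises beyond this identification.
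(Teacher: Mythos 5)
Your proof is correct and follows essentially the same route as the paper's: Duhamel's formula, reduction to a vanishing forcing term by subtracting the candidate limit $-A(\infty)^{-1}f(\infty)$ (your $h(t)$ coincides with the paper's $g(t)=f(t)+A(t)u(\infty)$), and a splitting of the convolution integral (you split at $t/2$, the paper at a fixed $T(\eps)$ beyond which the forcing is small --- both standard and equivalent). Your explicit caveat that $\exp\bigl(\int_s^t A(\tau)\,d\tau\bigr)$ is the evolution family for \eqref{eq:Cauchy} only when the operators commute is a point the paper leaves implicit behind its citation of Daletskii--Krein, and it is indeed satisfied in the multiplication-operator applications.
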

\begin{proof}
Since $A\in C_b\bigl([0,\infty)\to\mathcal{X}\bigr)$, the unique classical solution $u\in C_b\bigl([0,\infty),X\bigr)$ to \eqref{eq:Cauchy} (i.e. such that $u\in C^1\bigl((0,\infty),X\bigr)$) is given by
\begin{equation}\label{eq:wfewrew}
	u(t)=U(t,0)u(0)+\int_0^t U(t,s) f(s)\,ds,
\end{equation}
see e.g.~\cite[Chapter~3]{DK1974} (all integrals are in the sense of Bochner henceforth). By~\eqref{eq:Uexp},
\begin{equation}\label{eq:3r325532}
\|U(t,0)u(0)\|_X\leq e^{-\nu t}\|u(0)\|_X\to0,\qquad t\to\infty.
\end{equation}

Suppose, firstly, that $f(\infty)=0$. Since then $f(s)\to0$ in $X$ as $s\to\infty$, one gets that, for any $\eps>0$, there exists $T=T(\eps)>0$ such that $\|f(s)\|\leq \eps$ for all $s\geq T$. 
Since $f\in C\bigl([0,\infty),X\bigr)$, one can define
$\|f\|_T:=\sup\limits_{t\in[0,T]}\|f(t)\|_{X}<\infty$. Then
\begin{align*}
 \biggl\lvert \int_0^t U(t,s) f(s)\,ds\biggr\rVert&\leq
\int_0^T \lVert U(t,s)\rVert \|f(s)\|_X\,ds
+\eps \int_T^t \lVert U(t,s)\rVert \,ds\\
&\leq c\|f\|_T \int_0^T e^{-\nu(t-s)}\,ds+
c\,\eps \int_T^t e^{-\nu(t-s)}\,ds\\
&\leq c\|f\|_T\frac{1}{\nu}e^{-\nu(t-T)}+\frac{c \,\eps}{\nu},
 \end{align*}
and combining this with \eqref{eq:3r325532}, one gets that $u(t)\to 0=u(\infty)$ in $X$ as $t\to\infty$. 

For a general $f(\infty)\in X$, denote $v(t):=u(t)-u(\infty)$. Since $A(\infty)$ is invertible, one can define
\[
	u(\infty):=-A(\infty)^{-1} f(\infty)\in X.
\]
Then
\[
	\dfrac{d}{dt} v(t)=\dfrac{d}{dt}u(t)=A(t)u(t)+f(t)
	=A(t)v(t)+f(t)+A(t)u(\infty).
\]
We set $g(t):=f(t)+A(t)u(\infty)$, $t\geq0$. 
By the assumptions on $f$ and $A$, $g\in C\bigl([0,\infty),X\bigr)$ and 
\[
\lim_{t\to\infty} g(t)=f(\infty)+A(\infty)\bigl( -A(\infty)^{-1}f(\infty) \bigr) =0,
\]  
where the limit is in $X$. Then, by the proved above, $v(t)\to 0$ in $X$, and hence $u(t)\to u(\infty)$ in $X$.
\end{proof}

\begin{theorem}\label{thm:limits}
Let \eqref{eq:newA1}--\eqref{eq:compar} hold. Let $q_0$ satisfies \eqref{eq:q0small} and $g_0, \hat{g}_0\in L^1(\X)\cap L^\infty(\X)$. Then there exist limits
\begin{align}
\hat{g}^* (\xi):&=\lim_{t\to\infty} \hat{g}_t(\xi)=\frac{q^*  \hat{J}^* (\xi)}{\kap -\hat{J}^* (\xi)}\leq \frac{m}{\kam}, \qquad \xi\in\X, \label{eq:gfstat}\\
p^*:&=\lim_{t\to\infty} p_t=-\frac{1}{\kam }\int_\X \frac{\hat{J}^* (\xi)}{\kap -\hat{J}^* (\xi)}\hat{a}^-(\xi)\,d\xi\in\R.
\label{eq:pstat}
\end{align} 
Moreover, the convergence in \eqref{eq:gfstat} takes place in the norms of both $L^1(\X)$ and $L^\infty(\X)$.
As a result, $g_t$ converges, as $t\to\infty$,  in $L^\infty(\X)$ to $g^*$, the inverse Fourier transform of $\hat{g}^*$.
\end{theorem}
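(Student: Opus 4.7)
The plan is to apply Lemma~\ref{le:tobeused} twice: first to \eqref{eq:gtF} for $\hat g_t$, recast as a Cauchy problem on a suitable function space in the variable $\xi$, and then to the scalar equation \eqref{eq:ptF} for $p_t$. I would begin by identifying the candidate limit $\hat g^*(\xi)$ by setting the right-hand side of \eqref{eq:gtF} to zero at $t=\infty$: this gives $\bigl(\hat J^*(\xi)-\kap\bigr)\hat g^*(\xi)+q^*\hat J^*(\xi)=0$, and since $\kap-\hat J^*(\xi)>0$ by \eqref{eq:posFT}, solving yields the formula in \eqref{eq:gfstat}. The bound $|\hat g^*(\xi)|\leq m/\kam$ then follows immediately from \eqref{eq:lessm} and \eqref{eq:posFT}, and $\hat g^*\in L^1(\X)$ because $\hat J^*/(\kap-\hat J^*)$ is bounded and $\hat J^*\in L^1(\X)$ by \eqref{eq:newA1}.

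To set up the lemma for $\hat g_t$, I would let $A(t)$ act on $X=L^\infty(\X)$ (and independently on $X=L^1(\X)$) by multiplication by $\alpha_t(\xi):=2\bigl(\hat J_t(\xi)-\kam q_t-m\bigr)$, and set $f(t)(\xi):=2q_t\hat J_t(\xi)$. The identity
\[
\alpha_t(\xi)-\alpha_\infty(\xi)=2(q^*-q_t)\bigl(\hat a^-(\xi)+\kam\bigr),
\]
combined with $|\hat a^-|\leq\kam$, shows $\alpha_t\to\alpha_\infty:=2(\hat J^*-\kap)$ uniformly in $\xi$, hence $A(t)\to A(\infty)$ in operator norm on both spaces. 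By \eqref{eq:lessm}--\eqref{eq:posFT}, $\alpha_\infty(\xi)\leq-2\kam q^*<0$, so $A(\infty)$ is invertible. Assumption \eqref{eq:newA1} gives $\hat a^\pm\in L^1(\X)\cap L^\infty(\X)$, so $f\in C([0,\infty),L^1\cap L^\infty)$ and converges to $2q^*\hat J^*$ in both norms as $t\to\infty$. The main technical step is the uniform-in-$\xi$ exponential bound \eqref{eq:Uexp} on the operator $U(t,s)=$\,multiplication by $\exp\bigl(\int_s^t\alpha_\tau\,d\tau\bigr)$: I would choose $T_0$ large enough that the uniform convergence of $\alpha_t$ forces $\alpha_\tau(\xi)\leq-\kam q^*$ for all $\tau\geq T_0$ and $\xi\in\X$, and combine this with the trivial uniform bound on $\alpha_\tau$ on $[0,T_0]$ (coming from $|\hat J_\tau(\xi)|\leq\kap$ and $q_\tau\leq q^*$) to conclude $\|U(t,s)\|\leq c\,e^{-\kam q^*(t-s)}$ with $c$ independent of $\xi$. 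Lemma~\ref{le:tobeused} then yields $\hat g_t\to\hat g^*$ in both $L^\infty(\X)$ and $L^1(\X)$. Since $\|g_t-g^*\|_\infty\leq\|\hat g_t-\hat g^*\|_{L^1}$, the $L^\infty$ convergence of $g_t$ to $g^*:=\mathcal F^{-1}\hat g^*$ is immediate.

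For $p_t$, I would view \eqref{eq:ptF} as a scalar Cauchy problem $\dt p_t=\mathfrak a(t)p_t+\mathfrak f(t)$ with $\mathfrak a(t)=\kap-m-2\kam q_t\to-\kam q^*<0$ and $\mathfrak f(t)=-\int_\X\hat g_t(\xi)\hat a^-(\xi)\,d\xi\to-\int_\X\hat g^*(\xi)\hat a^-(\xi)\,d\xi$, where the limit uses the $L^1$ convergence of $\hat g_t$ together with $\hat a^-\in L^\infty(\X)$. A second, essentially trivial, application of Lemma~\ref{le:tobeused} with $X=\R$ gives $p_t\to-\mathfrak a(\infty)^{-1}\mathfrak f(\infty)=-\frac{1}{\kam q^*}\int_\X\hat g^*(\xi)\hat a^-(\xi)\,d\xi$, and substituting the formula for $\hat g^*$ recovers \eqref{eq:pstat}. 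The principal obstacle throughout is the uniform-in-$\xi$ exponential decay of $U(t,s)$ in the operator-valued setting; once this is in hand, the remaining hypotheses of Lemma~\ref{le:tobeused} reduce to continuity of $q_t$ and $\hat J_t$ in $t$ and routine bookkeeping with \eqref{eq:newA1}.
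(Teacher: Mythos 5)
Your proposal is correct and follows the same overall strategy as the paper: a double application of Lemma~\ref{le:tobeused}, first to \eqref{eq:gtF} viewed as a Cauchy problem for the multiplication operator $A(t)$ by $2\bigl(\hat J_t(\xi)-\kam q_t-m\bigr)$ on $L^1(\X)$ and $L^\infty(\X)$ with forcing $2q_t\hat J_t$, and then to the scalar equation \eqref{eq:ptF}. The one place where you genuinely diverge is the verification of the crucial bound \eqref{eq:Uexp}. The paper evaluates $\int_s^t j(\xi,\tau)\,d\tau$ in closed form, using $\int_s^t q_\tau\,d\tau=q^*(t-s)-\frac{1}{\kam}\log\frac{q_t}{q_s}$ from the logistic equation, and reads off $\lVert U(t,s)\rVert\leq (q^*/q_0)^4e^{-2(\kap-m)(t-s)}$ directly. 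You instead argue softly: since $\alpha_t-\alpha_\infty=2(q^*-q_t)(\hat a^-+\kam)$ is bounded by $4\kam(q^*-q_t)$ uniformly in $\xi$ and $\alpha_\infty\leq-2\kam q^*$ by \eqref{eq:lessm}, you pick $T_0$ with $\alpha_\tau\leq-\kam q^*$ for $\tau\geq T_0$ and absorb the (uniformly bounded) contribution of $[0,T_0]$ into the constant $c$. Both arguments are valid; the paper's yields explicit constants and the sharper rate $2(\kap-m)$, while yours is less computational and would survive any perturbation of $q_t$ that preserves monotone convergence to $q^*$. Your other small deviations are also fine and if anything cleaner: you get the convergence of the forcing term in \eqref{eq:ptF} from $\lvert\int(\hat g_t-\hat g^*)\hat a^-\rvert\leq\lVert\hat a^-\rVert_{L^\infty}\lVert\hat g_t-\hat g^*\rVert_{L^1}$ rather than from dominated convergence, and you note (correctly, though Lemma~\ref{le:tobeused} only needs strong convergence) that $A(t)\to A(\infty)$ in operator norm.
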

\begin{remark}
We will actually use in the proof a part of the estimate \eqref{eq:lessm} only, rather than the more strict assumption \eqref{eq:compar}. More precisely, it is easy to check that all arguments of the proof remain correct if the assume, instead of \eqref{eq:compar}, that, for some $\alpha\in(0,\kap)$,
\[
	\kap-\hat{J}^* (\xi)\geq \alpha, \qquad \xi\in\X;
\]
the uppear bound in \eqref{eq:gfstat} will be then replaced by $q^*\frac{\kap- \alpha}{\alpha}$.
\end{remark}
\begin{proof}[Proof of Theorem~\ref{thm:limits}]
We denote
\begin{equation}\label{eq:defj}
j(\xi,t):=\hat{J}_t(\xi)-\kam  q_t-m, \qquad \xi\in\X,\ t\geq0,
\end{equation}
and apply Lemma~\ref{le:tobeused} to equation \eqref{eq:gtF}, where $X=L^1(\X)$ or $X=L^\infty(\X)$, $A(t)$ is the multiplication operator by the function $2j(\xi,t)$, and $f(t,\xi)=2q_t\hat{J}_t(\xi)$. Note that, for any $t\geq s\geq0$, $\xi\in\X$, we have, by \eqref{eq:newA1}, \eqref{eq:fourest}, \eqref{eq:qtbelow},
\begin{align}\notag
\hat{J}_t(\cdot), j(\cdot,t)&\in L^1(\X)\cap L^\infty(\X),\\
|j(\xi,t)-j(\xi,s)|&\leq 2|\hat{J}_t(\xi)-\hat{J}_s(\xi)|+2\kam|q_t-q_s| \leq 4\kam \lvert q_t-q_s\rvert,\notag\\
|f(t,\xi)-f(s,\xi)|&\leq 2|q_t-q_s||\hat{J}_t(\xi)|
+2q_s|\hat{J}_t(\xi)-\hat{J}_s(\xi)|\notag\\
&\leq 2\bigl(|\hat{a}^+(\xi)|+q^*(q^*+1)|\hat{a}^-(\xi)| \bigr)|q_t-q_s|.\label{eq:AFR3WRT}
\end{align}
Therefore, $A\in C\bigl([0,\infty)\to\mathcal{L}(X)\bigr)$ and $f\in C\bigl([0,\infty),X\bigr)$ for both $X=L^1(\X)$ and $X=L^\infty(\X)$. Note that, by \eqref{eq:wfewrew},
\[
	\hat{g}_t(\xi) = \exp\biggl( 2 \int_0^t j(\xi,\tau)\,d \tau\biggr)\hat{g}_0(\xi)
		+\int_0^t\exp\biggl( 2 \int_s^t j(\xi,\tau)\,d\tau\biggr)  q_s \hat{J}_s (\xi)\,ds,
\]
and then, by the Riemann--Lebesgue lemma, $\hat{g}_t(\cdot)\in C(\X)$, $t\geq0$. 

By \eqref{eq:AFR3WRT}, we also have that $f(t,\xi)$ converges, in the norm of either of $X$ to $2q^*\hat{J}^*(\xi)$; and also $A(t)$ strongly converges to the operator $A(\infty)$ of the multiplication by
\[
	2\lim_{t\to\infty} \hat{J}_t(\xi)=2(\hat{J}^*(\xi)-\kam  q^*-m)=2(\hat{J}^*(\xi)-\kap).
\] 
By \eqref{eq:posFT}, operator $A(\infty)$ is invertible.

Next, for all $t> s\geq0$, $\int_s^t A(\tau)\,d \tau$ is the operator of multiplication by $2\int_s^t j(\cdot,\tau)\, d \tau$. We have
\begin{equation}
\int_s^t j(\xi,\tau)\,d \tau=\bigl(\hat{a}^+(\xi)-m\bigr)(t-s)-(\hat{a}^-(\xi)+\kam )\int_s^t  q_\tau\,d \tau.\label{eq:intermed}
\end{equation}
Since, by \eqref{eq:logistic},
\begin{equation*}
	\dt \log  q_t =\frac{1}{ q_t }\dt q_t =\kap -m -  \kam    q_t,
\end{equation*}
we get
\[
	\log q_t-\log q_s  = \int_s^t \frac{d}{d \tau}\log  q_\tau d \tau= (\kap -m )(t-s)-  \kam  \int_s^t   q_\tau d \tau,
\]
and hence
\begin{equation} \label{eq:intofqt}
\int_s^t  q_\tau d \tau = q^* (t-s)-\frac{1}{\kam }\log \frac{q_t}{q_s}.
\end{equation} 
Substituting \eqref{eq:intofqt} into \eqref{eq:intermed}, and using \eqref{eq:posFT} and that $q_t$ is increasing and $|\hat{a}^-(\xi)|\leq \kam $ holds, we get
\begin{equation}
\int_s^t j(\xi,\tau)\,d \tau= -\bigl(\kap -\hat{J}^* (\xi)\bigr) (t-s)+\frac{\hat{a}^-(\xi)+\kam }{\kam }\log \frac{q_t}{q_s}.\label{eq:return}
\end{equation}

Therefore, cosnidering a multiplication operator $U(t,s)=\exp\Bigl(\int_s^t A(\tau)\,d \tau \Bigr)$ and using \eqref{eq:posFT} and that
\[
	|\hat{a}^-(\xi)| \leq \kam , \qquad 0<q_0\leq q_s<q_t<q^*, \quad t>s>0,
\]
we get from \eqref{eq:return} that, in either of spaces $X$,
\begin{equation}
\|U(t,s)\|=\sup_{\xi\in\X}\exp\biggl(2\int_s^t j(\xi,\tau)\,d \tau\biggr)\leq \biggl(\frac{q^*}{q_0}\biggr)^4 e^{-2(\kap -m)(t-s)}. \label{eq:intest0}
\end{equation}

Therefore, by Lemma~\ref{le:tobeused}, 
\[
	\hat{g}_t(\xi)\to - A(\infty)^{-1}f(\infty)
	=-\frac{1}{2(\hat{J}^*(\xi)-\kap)}2q^*\hat{J}^*(\xi)=
	\frac{q^*\hat{J}^*(\xi)}{\kap-\hat{J}^*(\xi))}=:\hat{g}^*(\xi)
\]
in the sense of norm in both $L^1(\X)$ and $L^\infty(\X)$ (and, in particular, pointwise). We also have, by  \eqref{eq:newA1}, that
\[
	|\hat{g}^*(\xi)|\leq q^*\frac{m}{\kap -m}=\frac{m}{\kam }, \qquad \xi\in\X,
\]
that finishes the proof of \eqref{eq:gfstat}. 

Since $\hat{g}_t\in L^1(\X)$, its inverse Fourier transform coinsides with $g_t$ a.e.; in~particular, they coincide as elements of $L^\infty(\X)$. Hence if $g^*$ is the inverse Fourier transform of $\hat{g}^*\in L^1(\X)$, then, by \eqref{eq:fourest}, 
\[
	\|g_t-g^*\|_{L^\infty(\X)}\leq \|\hat g_t-\hat g^*\|_{L^1(\X)}\to0,\qquad t\to\infty,
\]
that proves the last statement of Theorem~\ref{thm:limits}.

We are going to apply now Lemma~\ref{le:tobeused} to equation \eqref{eq:ptF}, with $X=\R$. 
Since $\hat{g}_t\to \hat{g}^*$ in $L^\infty(\X)$ and $\hat{g}_t$ is a classical solution to \eqref{eq:gtF} in $L^\infty(\X)$ (i.e. a continuous mapping from $[0,\infty)$ to $L^\infty(\X)$), function $\hat{g}_t(\xi)$ is globally bounded in $t\geq0$ and $\xi\in\X$. Then
\[
	-\int_\X \hat{g}_t(\xi) \hat{a}^-(\xi)\,d\xi\to-\int_\X \hat{g}^*(\xi) \hat{a}^-(\xi)\,d\xi, \qquad t\to\infty,
\]
by the dominated convergence theorem. $A(t)$ is given now through the multiplication by  $c_t:=\kap -m   - 2 \kam  q_t$, and, by \eqref{eq:intofqt},
\begin{align*}
\int_s^t c_\tau d \tau&=(\kap -m)(t-s)- 2 \kam  q^* (t-s)+2 \log \frac{q_t}{q_s}\notag\\
&=-(\kap -m)(t-s)+2 \log \frac{q_t}{q_s}\leq-(\kap -m)(t-s)+2 \log \frac{q^*}{q_0}.
\end{align*}
Hence, by the same arguments as before, we can apply Lemma~\ref{le:tobeused}: since, by \eqref{eq:gfstat},
\[
	\lim_{t\to\infty} c_t=\kap -m   - 2 \kam  q^*=
	-\kam q^*,
\]
we get
\[
	p_t\to -\frac{1}{\kam q^*}\int_\X \hat{g}^*(\xi) \hat{a}^-(\xi)\,d\xi,
\]
that implies \eqref{eq:pstat}.
\end{proof}

\section{Critical mortality}\label{sec:critmort}

We are going to discuss now the extinction regime.
Recall that $o(\eps^d)$ in \eqref{eq:beyondmfhom1} depends on $t$, so we have
\[
k_{t,\eps}=q_t+\eps^d p_t+o_t(\eps^d),
\]
where, for each $t>0$, 
\[
\lim_{\eps\to0}\frac{o_t(\eps^d)}{\eps^d}=0.
\]
We will assume that
\begin{equation*}
\lim_{t\to\infty}o_t(\eps^d)=o(\eps^d).
\end{equation*}
Then, the extinction \eqref{eq:extinction} takes place if and only if \eqref{eq:exteqnintro} holds.

We fix an $m\in(0,\kap )$ for which \eqref{eq:compar} holds. 
We consider a function $\mcr:(0,1)\to (m,\kap)$ and set, cf. \eqref{eq:theta}, for $\eps\in(0,1)$,
\begin{align}\label{eq:qepsbdd}
q^*(\eps):&=\frac{\kap -\mcr (\eps)}{\kam }\in\Bigl( 0,\frac{\kap -m}{\kam }\Bigr),\\
\intertext{and also, cf. \eqref{eq:compar},} \label{eq:defjeps}
J_\eps(x):&=a^+ (x)- q^*(\eps)  a^- (x)\geq0,
\\
\intertext{because of \eqref{eq:compar}, \eqref{eq:qepsbdd}. Finally, we set, cf. \eqref{eq:pstat}, for $\eps\in(0,1)$,}
\label{eq:pepsint}
p^*(\eps):&=-\frac{1}{\kam }\int_\X \frac{\hat{J}_\eps (\xi)}{\kap -\hat{J}_\eps (\xi)}\hat{a}^-(\xi)\,d\xi.
\end{align}
Note that, by \eqref{eq:fourest}, \eqref{eq:defjeps}, 
\begin{equation}\label{eq:ineqhatJeps}
|\hat{J}_\eps (\xi)|\leq \int_\X 
\bigl\lvert {J}_\eps (x)\bigr\rvert\,dx=\int_\X {J}_\eps(x) \,dx = \mcr (\eps),
\end{equation}
and hence $p^*(\eps)$ is well-defined: \eqref{eq:ineqhatJeps} and \eqref{eq:newA1} yield
\begin{equation*}
|p^*(\eps)|\leq \frac{1}{\kam }\frac{\mcr (\eps)}{\kap -\mcr (\eps)}\int_\X \bigl\lvert \hat{a}^-(\xi)\bigr\rvert\,d\xi<\infty, \qquad \eps\in(0,1).
\end{equation*} 

In the rest of the paper, our main object of interest will be the following equation, cf. \eqref{eq:exteqnintro},
\begin{equation}\label{eq:mcrchar}
q^*(\eps)+ \eps^d p^*(\eps)+o(\eps^d)=0.
\end{equation}

\begin{proposition}\label{prop:evidentlimit}
Let \eqref{eq:newA1}--\eqref{eq:compar} hold. If  \eqref{eq:mcrchar} holds and there exists $\lim\limits_{\eps\to0}\mcr (\eps)$, then
\begin{equation}\label{eq:mcritconv}
\lim_{\eps\to0}\mcr (\eps)=\kap, \qquad 
\lim_{\eps\to0}q^*(\eps)=0.
\end{equation}
\end{proposition}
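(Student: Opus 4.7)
The plan is to argue by contradiction: denote $m^\star:=\lim_{\eps\to0}\mcr(\eps)\in[m,\kap]$ and assume $m^\star<\kap$, then derive a contradiction from \eqref{eq:mcrchar}. If this works, the second statement $\lim_{\eps\to0}q^\star(\eps)=0$ follows immediately from \eqref{eq:qepsbdd}.

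First I would observe that, since $\mcr(\eps)\in(m,\kap)$, \eqref{eq:qepsbdd} gives $q^\star(\eps)\in\bigl(0,\frac{\kap-m}{\kam}\bigr)$, so $q^\star(\eps)$ is uniformly bounded. If $m^\star<\kap$, then by continuity $q^\star(\eps)\to\frac{\kap-m^\star}{\kam}>0$ as $\eps\to0$.

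Second, I would show that $p^\star(\eps)$ stays bounded as $\eps\to0$ under the contradiction hypothesis $m^\star<\kap$. This is the only step that requires a small argument: the estimate displayed immediately after \eqref{eq:pepsint} gives
\[
|p^\star(\eps)|\leq \frac{1}{\kam}\,\frac{\mcr(\eps)}{\kap-\mcr(\eps)}\int_\X |\hat{a}^-(\xi)|\,d\xi,
\]
and since $\mcr(\eps)$ tends to $m^\star<\kap$, the factor $\frac{\mcr(\eps)}{\kap-\mcr(\eps)}$ stays bounded. Hence $\eps^d p^\star(\eps)\to 0$.

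Passing to the limit $\eps\to 0$ in \eqref{eq:mcrchar} then yields $\frac{\kap-m^\star}{\kam}=0$, contradicting $m^\star<\kap$. Therefore $m^\star=\kap$, which is the first assertion in \eqref{eq:mcritconv}; the second assertion follows from the definition \eqref{eq:qepsbdd}. The only mildly delicate point is ensuring that $p^\star(\eps)$ cannot blow up faster than $\eps^{-d}$ in the contradiction branch; that is handled straight away by the explicit bound above, which degenerates only as $\mcr(\eps)\nearrow\kap$, a regime ruled out by our hypothesis $m^\star<\kap$.
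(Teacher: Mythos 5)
Your proof is correct and follows essentially the same route as the paper: argue by contradiction, use the bound $|\hat{J}_\eps(\xi)|\leq \mcr(\eps)$ to keep $p^*(\eps)$ uniformly bounded when $\lim_{\eps\to0}\mcr(\eps)<\kap$, conclude $\eps^d p^*(\eps)\to0$, and derive a contradiction with \eqref{eq:mcrchar}. The only cosmetic difference is that you invoke the estimate displayed after \eqref{eq:pepsint} directly, whereas the paper rederives it with $\alpha\kap$ in place of $\mcr(\eps)$.
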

\begin{proof}
Clearly, $\mcr (0):=\lim\limits_{\eps\to0}\mcr (\eps)\leq \kap$. Suppose that $\mcr (0)<\kap$. Let $\alpha\in(0,1)$ be such that $\alpha \kap>\mcr (0)$. Then there exists $\eps_\alpha\in(0,1)$ such that $\mcr (\eps)<\alpha \kap$ for all $0<\eps<\eps_\alpha$. Therefore, by \eqref{eq:ineqhatJeps}, 
\[
|\hat{J}_\eps (\xi)| <\alpha \kap, \qquad \xi\in\X, \ 0<\eps<\eps_\alpha.
\]
Then, by \eqref{eq:pepsint}, \eqref{eq:newA1},
\[
	|p^*(\eps)|\leq \frac{1}{\kam }\int_\X \frac{\alpha \kap}{\kap -\alpha \kap}\hat{a}^-(\xi)\,d\xi<\infty,
\]
and hence $\eps^d p^*(\eps)\to0$, $\eps\to0$. Therefore, by \eqref{eq:mcrchar}, we get that $q^*(\eps)\to0$ and hence, by \eqref{eq:qepsbdd}, $\mcr (\eps)\to\kap$ that contradicts the assumption. The statement is proved.
\end{proof}

The behaviour of $p^*(\eps)$ as $\eps\to0$ depends on the dimension $d\in\N$: the limit (as $\eps\to0$) of the integrand in \eqref{eq:pepsint} is equal to, because of \eqref{eq:mcritconv}, 
$\dfrac{\hat{a}^+(\xi)\hat{a}^-(\xi)}{\kap-\hat{a}^+(\xi)}$
that has a singularity at the origin, which is, in general, non-integrable for $d<3$. We discuss this under the following additional assumption:
\begin{equation}\tag{\textbf{A4}}\label{eq:secmom}
\int_\X |x|^2a^+(x)\,dx<\infty.
\end{equation}
Since $a^+ \in L^\infty(\X)$, the inequality in \eqref{eq:secmom} implies that  $\int_\X |x|a^+(x)\,dx<\infty$, and using that $a^+(-x)=a^+(x)$, $x\in\X$, we get
\[
\int_\X x a^+ (x)\,dx=0\in\X.
\]
Then, by the Taylor expansion for $\hat{a}^+(\xi)$ defined by \eqref{eq:fourier}, we get (cf. \cite[Corollary 1.2.7]{Sas2013} for another coefficients of the Fourier transform) that
\[
	\hat{a}^+(\xi)=\hat{a}^+(0)-2\pi^2\sum_{i,j=1}^d a^+_{i,j} \xi_i\xi_j+ o(|\xi|^2)=\kap -2\pi^2 A^+\xi\cdot\xi+ o(|\xi|^2),
\]
for $\xi\to0$, where
\begin{equation}\label{eq:defofaplus}
a^+_{i,j}  := \int_\X x_ix_j a^+ (x)\,dx, \qquad 1\leq i,j\leq d,
\end{equation}
and hence $A^+:=\bigl( a_{i,j} \bigr)_{i,j=1}^d$ is a Hermitian (strictly) positive definite matrix. Then there exists a Hermitian (strictly) positive definite matrix $B^+$ such that $(B^+)^2=A^+$, and hence
\begin{equation}\label{eq:Texpcf}
\kap - \hat{a}^+(\xi)=2\pi^2|B^+ \xi|^2 + o(|\xi|^2), \qquad \xi\to0.
\end{equation}

Under assumptions \eqref{eq:newA1}--\eqref{eq:compar}, assumption \eqref{eq:secmom} holds with $a^+$ replaced by $a^-$ and hence by $J_\eps$ or $J^*$. Let $B^-,B_\eps,B^*$ be the Hermitian positive definite matrices corresponding to the functions $a^-,J_\eps,J^*$, respectively. Then, the corresponding analogues of \eqref{eq:Texpcf} hold, with, in particular, $\kap$ replaced by $\kam=\hat{a}^\pm(0)$, $\mcr(\eps)=\hat{J}_\eps(0)$, $m=\hat{J}^*(0)$, respectively. It is easy to see also that
\begin{equation}\label{eq:soineedthis}
|B_\eps \xi|^2=|B^+\xi|^2-q^*(\eps)|B^-\xi|^2, \qquad \xi\in\X, \ \eps\in(0,1).
\end{equation}

Next, for any invertible matrix $B$,
\begin{equation}\label{eq:dblest}
\bigl( \bigl\lVert (B )^{-1}\bigr\rVert \bigr)^{-1}|\xi| \leq |B  \xi| \leq \|B \||\xi|, \qquad \xi\in\X,
\end{equation} 
Then, for small enough $\delta>0$,
\begin{equation}\label{eq:dblestlemma}
\frac{\pi^2}{\lVert (B^\pm)^{-1} \rVert^2}\lvert \xi\rvert ^2 \leq \kapm-\hat{a}^\pm(\xi)\leq 3\pi^2\lVert B^\pm\rVert^2 \lvert \xi\rvert ^2, \qquad |\xi|\leq \delta.
\end{equation}
The corresponding double inequalities can be also obtained for $J_\eps$ and $J^*$.

\begin{proposition}\label{eq:limitofpstar}
Let \eqref{eq:newA1}--\eqref{eq:secmom} hold. 
Let also $\mcr:(0,1)\to (m,\kap)$ and $p^*(\eps)$, defined through \eqref{eq:qepsbdd}--\eqref{eq:pepsint}, be such that \eqref{eq:mcritconv} holds. 
Then, for $d\geq3$,
\begin{equation}\label{eq:pstar0}
\lim_{\eps\to0}p^*(\eps)=-\frac{1}{\kam }\int_\X \frac{\hat{a}^+(\xi)\hat{a}^-(\xi)}{\kap -\hat{a}^+(\xi)}\,d\xi=:p^*(0)\in\R,
\end{equation}
whereas, for $d\leq 2$, $\lim\limits_{\eps\to0}p^*(\eps)=-\infty$.  
\end{proposition}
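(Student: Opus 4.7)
The hypothesis $\mcr(\eps)\to\kap$ (equivalently $q^*(\eps)\to0$) implies $\|J_\eps-a^+\|_{L^1(\X)}=q^*(\eps)\kam\to0$, and hence, by \eqref{eq:fourest}, $\hat{J}_\eps\to \hat{a}^+$ uniformly on $\X$. Consequently, the integrand of \eqref{eq:pepsint} converges pointwise on $\{\xi:\hat{a}^+(\xi)<\kap\}$ to $\hat{a}^+(\xi)\hat{a}^-(\xi)/(\kap-\hat{a}^+(\xi))$, and the task reduces to justifying the passage to the limit under the integral sign and, in the low-dimensional case, forcing divergence.

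For the case $d\geq3$ the plan is to apply dominated convergence with a splitting at the origin. The crucial uniform estimate is a lower Taylor bound for $\kap-\hat{J}_\eps(\xi)$ near $\xi=0$. Since $q^*(\eps)\to0$, relation \eqref{eq:soineedthis} gives $B_\eps\to B^+$, so $\lVert B_\eps^{-1}\rVert$ stays bounded uniformly in small $\eps$, and the version of \eqref{eq:dblestlemma} applied to $J_\eps$ yields $\delta,\eps_0,c_1>0$ with
\begin{equation*}
\kap-\hat{J}_\eps(\xi)\geq \mcr(\eps)-\hat{J}_\eps(\xi)\geq c_1|\xi|^2,\qquad |\xi|\leq\delta,\ \eps\in(0,\eps_0).
\end{equation*}
This makes $\kap\kam/(c_1|\xi|^2)$ an integrable dominant on $\{|\xi|\leq\delta\}$ precisely when $d\geq3$. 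On the exterior $\{|\xi|>\delta\}$, I would combine the uniform convergence $\hat{J}_\eps\to\hat{a}^+$ with the (generic) strict positivity $\hat{a}^+(\xi)<\kap$ for $\xi\neq0$ coming from $\kap-\hat{a}^+(\xi)=\int_\X(1-\cos(2\pi x\cdot\xi))a^+(x)\,dx$, together with Riemann--Lebesgue decay of $\hat{a}^+$ at infinity, to produce a uniform $c_2>0$ with $\kap-\hat{J}_\eps(\xi)\geq c_2$ there. The dominant $\kap|\hat{a}^-(\xi)|/c_2$ is integrable by \eqref{eq:newA1}, and dominated convergence delivers \eqref{eq:pstar0}.

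For $d\leq2$ the plan is to show that the integral in \eqref{eq:pepsint} tends to $+\infty$. The upper Taylor companion of \eqref{eq:dblestlemma} applied to $J_\eps$ gives, uniformly in small $\eps$,
\begin{equation*}
\kap-\hat{J}_\eps(\xi)=\kam q^*(\eps)+\bigl(\mcr(\eps)-\hat{J}_\eps(\xi)\bigr)\leq \kam q^*(\eps)+C_1|\xi|^2,\qquad |\xi|\leq\delta.
\end{equation*}
By continuity and the limits $\hat{J}_\eps(0)=\mcr(\eps)\to\kap$, $\hat{a}^-(0)=\kam$, one can shrink $\delta$ so that $\hat{J}_\eps(\xi)\hat{a}^-(\xi)\geq \kap\kam/4$ on $\{|\xi|\leq\delta\}$ for all small $\eps$. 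Then on that ball the integrand is bounded below by $c_3/(q^*(\eps)+c_4|\xi|^2)$, and integration in polar coordinates gives contributions of order $q^*(\eps)^{-1/2}$ for $d=1$ and $-\log q^*(\eps)$ for $d=2$, both diverging to $+\infty$ as $\eps\to0$. The complementary integral over $\{|\xi|>\delta\}$ is bounded in absolute value by the same exterior dominant used in the $d\geq3$ case. Dividing by $-\kam$ gives $p^*(\eps)\to-\infty$.

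The main obstacle is the uniform exterior bound on $\kap-\hat{J}_\eps(\xi)$ for $|\xi|\geq\delta$: once this is in hand, everything else is a routine splitting and Taylor argument anchored at the origin. The natural route is a compactness argument on annuli $\{\delta\leq|\xi|\leq R\}$ (using the strict positivity of $\kap-\hat{a}^+$ for $\xi\neq0$) together with Riemann--Lebesgue decay to handle $|\xi|\geq R$, the uniform convergence $\hat{J}_\eps\to\hat{a}^+$ then transferring the bound to small $\eps$.
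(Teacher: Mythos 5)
Your proposal is correct and follows the same overall plan as the paper (split at a small ball, Taylor bound $\mcr(\eps)-\hat{J}_\eps(\xi)\gtrsim|\xi|^2$ for the interior with dominated convergence when $d\geq3$, and a matching upper Taylor bound that forces logarithmic/square-root divergence when $d\leq2$), but you handle the exterior piece by a different route. The paper observes the pointwise monotonicity $\kap-\hat{J}_\eps(\xi)>m-\hat{J}^*(\xi)\geq0$, which follows from \eqref{eq:compar} and $q^*(\eps)<q^*$; then, since $m-\hat{J}^*$ is continuous, vanishes only at the origin and tends to $m$ at infinity, it is bounded below by some $\mu_\delta>0$ on $\{|\xi|\geq\delta\}$, giving an $\eps$-independent exterior dominant in one stroke. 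You instead use the uniform convergence $\hat{J}_\eps\to\hat{a}^+$ together with compactness on annuli, strict positivity of $\kap-\hat{a}^+(\xi)$ off the origin, and Riemann--Lebesgue decay. Both are valid; the paper's majorization via $m-\hat{J}^*$ is more economical and does not require any small-$\eps$ restriction, while your argument is slightly longer but makes explicit the uniformity in $\eps$ of the interior constant $\lVert B_\eps^{-1}\rVert$ via \eqref{eq:soineedthis}, a point the paper's proof leaves implicit. One small imprecision: you call the strict positivity of $\kap-\hat{a}^+(\xi)$ for $\xi\neq0$ ``generic'', but it is actually automatic here, since $\kap-\hat{a}^+(\xi)=\int_\X(1-\cos(2\pi x\cdot\xi))a^+(x)\,dx$ and $a^+\geq0$ with $\int a^+>0$ cannot be supported on the null set $\{x:x\cdot\xi\in\mathbb{Z}\}$.
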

\begin{remark}
Note that we do not need to assume \eqref{eq:mcrchar} to get the statement. 
\end{remark}
\begin{proof}
For each $\delta>0$, one can expand $p^*(\eps)$ as follows
\begin{align*}
p^*(\eps)&=p_{\leq \delta}^*(\eps)+p_{\geq \delta}^*(\eps)\\ &=:
-\frac{1}{\kam }\int_{|\xi|\leq \delta}\frac{\hat{J}_\eps (\xi)}{\kap -\hat{J}_\eps (\xi)}\hat{a}^-(\xi)\,d\xi-\frac{1}{\kam }\int_{|\xi|\geq \delta} \frac{\hat{J}_\eps (\xi)}{\kap -\hat{J}_\eps (\xi)}\hat{a}^-(\xi)\,d\xi.
\end{align*}
To estimate $p_{\geq \delta}^*(\eps)$, we verify firsly the following inequality:
\begin{equation}\label{eq:inequnif}
\kap - \hat{J}_\eps(\xi)> m-\hat{J}^*(\xi)\geq0, \qquad \xi\in\X.
\end{equation}
Namely, by \eqref{eq:compar}, \eqref{eq:defjeps}, the first inequality in \eqref{eq:inequnif} is equivalent to
\[
	\kap-m> \bigl(q^*-q^*(\eps)\bigr)\hat{a}^-(\xi), \qquad \xi\in\X,
\]
that is true since $|\hat{a}^-(\xi)|\leq \kam$, $\xi\in\X$, and $q^*-q^*(\eps)<q^*=\frac{\kap-m}{\kam}$. The second inequality in \eqref{eq:inequnif} is just \eqref{eq:lessm}. 

Next, since the function 
\[
m-\hat{J}^*(\xi)\geq m-\hat{J}^*(0)=0
\] 
is continuous in $\xi\in\X$, we conclude, cf.~\eqref{eq:pepsint}, that, for any $\delta>0$, there exists $\mu_\delta>0$, such that
\[
	m-\hat{J}^*(\xi)\geq \mu_\delta, \qquad |\xi|\geq \delta,
\]
and hence
\begin{equation}\label{eq:estextball}
 |p_{\geq \delta}^*(\eps)|\leq 
\frac{1}{\kam }  \frac{\kap}{\mu_\delta} \int_\X
\bigl\lvert\hat{a}^-(\xi)\bigr\rvert\,d\xi<\infty.
 \end{equation}

Next, by an analogue of \eqref{eq:dblestlemma} for $J_\eps$, we get, for small enough $\delta>0$,
\[
\kap-\hat{J}_\eps (\xi)\geq \mcr (\eps)-\hat{J}_\eps (\xi)\geq \frac{\pi^2}{\lVert B_\eps^{-1}\rVert^2}|\xi|^2,
\]
and hence,
\[
|p^*_{\leq \delta}(\eps)|\leq \const \int_{|\xi|\leq \delta} \frac{1}{|\xi|^2}\,d\xi<\infty \qquad \text{for } d\geq3.
\]
Combining the latter estimate with \eqref{eq:estextball}, we get that, for $d\geq3$, \eqref{eq:pstar0} holds by \eqref{eq:mcritconv} and the dominated convergence theorem.

Let now $d\leq 2$. By \eqref{eq:dblestlemma}, one can always choose $\delta_0>0$ small enough to ensure that, for $\delta<\delta_0$,
\[
\hat{a}^-(\xi)\geq  \kam - 3\pi^2\lVert B^-\rVert^2 \lvert \xi\rvert ^2>\frac{\kam}{2}>0, \qquad |\xi|\leq \delta.
\]
Then
\[
\hat{J}_\eps(\xi)=\hat{a}^+(\xi) - q^*(\eps)\hat{a}^-(\xi)\geq \hat{J}^*(\xi), \qquad |\xi|\leq \delta<\delta_0,
\]
and, possibly redefining $\delta_0$, we similarly get that 
\[
\hat{J}^*(\xi)\geq m - 3\pi^2\lVert B^*\rVert^2 \lvert \xi\rvert ^2>\frac{m}{2}>0, \qquad |\xi|\leq \delta<\delta_0.
\]
Next, by \eqref{eq:dblestlemma} applied to $a=J_\eps$, we get
\begin{align*}
	\kap-\hat{J}_\eps(\xi)&=\kap-\mcr(\eps)+\mcr(\eps)-\hat{J}_\eps(\xi)\\&\leq \kap-\mcr(\eps)+3\pi^2\|B_\eps\|^2 |\xi|^2\\
	&\leq \kap-\mcr(\eps)+3\pi^2\|B^+\|^2 |\xi|^2, \qquad |\xi|\leq \delta,
\end{align*}
where we used \eqref{eq:soineedthis}. Combining the previous inequalities, we get that, for a fixed $\delta<\delta_0$,
\begin{align*}
-p_{\leq \delta}^*(\eps)&=
\frac{1}{\kam }\int_{|\xi|\leq \delta}\frac{\hat{J}_\eps (\xi)}{\kap -\hat{J}_\eps (\xi)}\hat{a}^-(\xi)\,d\xi\\
&\geq \frac{m}{4}\int_{|\xi|\leq \delta}\frac{1}{\kap-\mcr(\eps)+3\pi^2\|B^+\|^2 |\xi|^2}\,d\xi.
\end{align*}

Therefore, for $d=2$, we get, by passing to polar coordinates, that
\[
	-p_{\leq \delta}^*(\eps)\geq c_1\log\biggl(1+\frac{c_2}{\kap-\mcr(\eps)}\biggr);
\] 
and, for $d=1$, we get that
\[
	-p_{\leq \delta}^*(\eps)\geq \frac{c_3}{\sqrt{\kap-\mcr(\eps)}} \arctan\frac{c_4}{\sqrt{\kap-\mcr(\eps)}},
\]
for certain $c_1,c_2,c_3,c_4>0$ (with $c_2,c_4$ depending on the fixed $\delta$).
 Since, by \eqref{eq:mcritconv}, $\kap-\mcr(\eps)\to0$ as $\eps\to0$, the statement is proved.
\end{proof}

\section{Asymptotics of the critical mortality: \texorpdfstring{$d\geq 3$}{d\textgreater=3}}\label{sec:asympd3}

We are going to reveal the asymptotic of $\mcr (\eps)$  (or, equivalently, $q^*(\eps)$) assuming that \eqref{eq:mcrchar} does hold. We start with the case $d\geq3$. 

If, additionally to \eqref{eq:newA1}--\eqref{eq:secmom} and \eqref{eq:mcrchar}, we assume that the limit $\lim\limits_{\eps\to0}\mcr (\eps)$ exists, then, by Proposition~\ref{prop:evidentlimit}, \eqref{eq:mcritconv} holds, and hence, by Proposition~\ref{eq:limitofpstar}, we get \eqref{eq:pstar0}. Then \eqref{eq:mcrchar} implies
\[
	\lim_{\eps\to0}\frac{q^*(\eps)}{\eps^d}=-\lim_{\eps\to0}p^*(\eps)-\lim_{\eps\to0}\frac{o(\eps^d)}{\eps^d}=- p^*(0)=\frac{1}{\kam }\int_\X \frac{\hat{a}^+(\xi)\hat{a}^-(\xi)}{\kap -\hat{a}^+(\xi)}\,d\xi.
\]
Since $q^*(\eps)>0$, we will get that $I\geq0$, where
\begin{equation}\label{eq:int}
I:=\int_\X\frac{\hat{a}^+(\xi)}{ \kap  -\hat{a}^+(\xi)}\hat{a}^-(\xi)\,d\xi.
\end{equation}

The first statement of the following theorem shows that one can replace the requiremnt about existence of the limit of $\mcr(\eps)$ by the continuity of the function
\begin{equation} \label{r1eps}
r(\eps):=\frac{o(\eps^d)}{\eps^d}
\end{equation}
in a neighbourhood of $0$, where $o(\eps^d)$ is from \eqref{eq:mcrchar}. Then, we reveal the next term of the asymptotic under additional smoothness of $r(\eps)$.

\begin{theorem}\label{thm:assympd3}
Let $d\geq3$  and \eqref{eq:newA1}--\eqref{eq:secmom}, \eqref{eq:mcrchar} hold. Let $r$ given by \eqref{r1eps} be  continuous for small $\eps>0$. Let $I\neq0$, where $I$ is given by \eqref{eq:int}.
Then $I>0$ and
\begin{equation}\label{eq:asymptdgeq3}
q^*(\eps)= \frac{I}{ \kam   } \eps^{d} +o(\eps^{d}).
\end{equation}
As a result,
\begin{equation}
\mcr (\eps)=\kap -\eps^d I +o(\eps^d).\label{eq:mcrdgeq3}
\end{equation}

If, additionally, $r(\eps)$ is continuously differentiable for small $\eps>0$ and if $r'(0):=\lim\limits_{\eps\to0+}r'(\eps)<\infty$, then it determines the next term of the asymptotics, namely, in \eqref{eq:asymptdgeq3}, \eqref{eq:mcrdgeq3}
\begin{equation}\label{eq:32235352dfgfdg}
o(\eps^{d})= - r'(0)\eps^{d+1} +o(\eps^{d+1}).
\end{equation}
\end{theorem}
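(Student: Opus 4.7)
The plan is to exploit the explicit form of $p^*(\eps)$ in \eqref{eq:pepsint} and extract successively finer terms from \eqref{eq:mcrchar}, first for the leading order (Part~1) and then for the $\eps^{d+1}$ correction (Part~2). The immediate obstacle is that Proposition~\ref{eq:limitofpstar} presupposes \eqref{eq:mcritconv}, yet we do not know a priori that $\mcr(\eps)$ converges.

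I would bypass this by a subsequential compactness argument. Since $q^*(\eps)\in(0,q^*)$ is bounded, any sequence $\eps_n\to0^+$ has a subsequence with $q^*(\eps_{n_k})\to q_0\in[0,q^*]$ and $\mcr(\eps_{n_k})\to\kap-\kam q_0$. If $q_0>0$, then $\mcr(\eps_{n_k})\leq\alpha\kap$ for some $\alpha<1$ and all large $k$, so by \eqref{eq:ineqhatJeps} and exactly the estimate from the proof of Proposition~\ref{prop:evidentlimit}, $p^*(\eps_{n_k})$ is uniformly bounded. Then \eqref{eq:mcrchar} forces $q^*(\eps_{n_k})\to0$, contradicting $q_0>0$. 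Hence $q_0=0$, and \eqref{eq:mcritconv} holds. Proposition~\ref{eq:limitofpstar} now gives $p^*(\eps)\to p^*(0)=-I/\kam$; dividing \eqref{eq:mcrchar} by $\eps^d$ and letting $\eps\to0^+$ yields $q^*(\eps)/\eps^d\to I/\kam$. Positivity of $q^*(\eps)$ forces $I\geq0$, and the hypothesis $I\neq0$ upgrades this to $I>0$, proving \eqref{eq:asymptdgeq3} and \eqref{eq:mcrdgeq3}.

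For Part~2, the identity
\[
p^*(\eps)-p^*(0)=\frac{\kap q^*(\eps)}{\kam}\int_\X \frac{(\hat a^-(\xi))^2}{\bigl(\kap-\hat J_\eps(\xi)\bigr)\bigl(\kap-\hat a^+(\xi)\bigr)}\,d\xi,
\]
obtained by combining the fractions over a common denominator and using $\hat J_\eps-\hat a^+=-q^*(\eps)\hat a^-$, reduces the task to estimating the displayed integral. Splitting at $|\xi|=\delta$ as in Proposition~\ref{eq:limitofpstar}, the outer piece is $O(1)$ by \eqref{eq:inequnif}; on $|\xi|\leq\delta$, the double bound \eqref{eq:dblestlemma} applied to $a^+$ and to $J_\eps$, together with $\kap-\hat J_\eps=\kam q^*(\eps)+(\mcr(\eps)-\hat J_\eps)$, controls the integrand by a constant multiple of $|\xi|^{-2}\bigl(|\xi|^2+q^*(\eps)\bigr)^{-1}$. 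Passing to polar coordinates, the near-origin integral is of order $1/\sqrt{q^*(\eps)}\sim\eps^{-d/2}$ for $d=3$, $\log(1/q^*(\eps))\sim\log\eps^{-1}$ for $d=4$, and $O(1)$ for $d\geq5$. Multiplying by the prefactor $q^*(\eps)\sim\eps^d$ then yields $p^*(\eps)-p^*(0)=o(\eps)$ in every case $d\geq3$, so $\eps^d\bigl(p^*(\eps)-p^*(0)\bigr)=o(\eps^{d+1})$.

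Finally, continuous differentiability of $r$ on a neighbourhood of $0$ with $r(\eps)\to0$ and $r'(0):=\lim_{\eps\to0^+}r'(\eps)<\infty$ allows writing $r(\eps)=\int_0^\eps r'(t)\,dt=r'(0)\eps+o(\eps)$, whence $\eps^d r(\eps)=r'(0)\eps^{d+1}+o(\eps^{d+1})$. Substituting this and the previous estimate into
\[
q^*(\eps)=-\eps^d p^*(0)-\eps^d\bigl(p^*(\eps)-p^*(0)\bigr)-\eps^d r(\eps)
\]
gives \eqref{eq:32235352dfgfdg}. The main technical obstacle is the sharp near-origin estimate of the displayed integral in dimensions $d=3,4$, where the integrand $(\kap-\hat J_\eps)^{-1}(\kap-\hat a^+)^{-1}$ has no $\eps$-uniform integrable majorant and its singular dependence on $q^*(\eps)$ must be tracked quantitatively.
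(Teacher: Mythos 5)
Your proposal is correct, and it reaches both parts of the theorem by a route that is structurally different from the paper's. The paper sets $\la(\eps)=\eps^{-d}q^*(\eps)$, rewrites \eqref{eq:mcrchar} as $F(\la(\eps),\eps)=0$ for an explicit $F$ on a rectangle around $(\la_3,0)$, and invokes the implicit function theorem twice: continuity of $F$ and of $\partial_\la F$ yields the leading term, and continuity of $\partial_\eps F$ (whose value at $\eps=0$ is $-r'(0)$) yields the correction. You instead (i) establish \eqref{eq:mcritconv} directly by a subsequential compactness argument --- which, notably, uses only $r(\eps)\to0$ and not the continuity of $r$, so your first part holds under a weaker hypothesis than stated --- and then read the leading term off Proposition~\ref{eq:limitofpstar} after dividing \eqref{eq:mcrchar} by $\eps^d$; and (ii) obtain the $\eps^{d+1}$ correction from the common-denominator identity for $p^*(\eps)-p^*(0)$ together with a direct $o(\eps)$ bound. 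The analytic core of your step (ii) --- the kernel $(\hat a^-)^2\bigl(\kap-\hat J_\eps\bigr)^{-1}\bigl(\kap-\hat a^+\bigr)^{-1}$ and the near-origin integral $\int_0^\delta r^{d-3}\bigl(r^2+cq^*(\eps)\bigr)^{-1}dr$ of order $\eps^{-d/2}$, $\log\eps^{-1}$, $O(1)$ for $d=3$, $d=4$, $d\geq5$ --- is in substance the same computation the paper carries out for $\partial_\eps F$ (the function $b(\la,\eps,\xi)$ in its Step 4), so the two proofs share their hardest estimate; what you buy by avoiding the IFT packaging is a shorter argument that sidesteps any question of whether the actual $\eps^{-d}q^*(\eps)$ coincides with the locally unique IFT branch. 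One point to make explicit when writing this up: the lower bound $\mcr(\eps)-\hat J_\eps(\xi)\geq c|\xi|^2$ on $|\xi|\leq\delta$ must be uniform in $\eps$; this follows from $\mcr(\eps)-\hat J_\eps=(\kap-\hat a^+)-q^*(\eps)(\kam-\hat a^-)$, the two-sided bounds \eqref{eq:dblestlemma}, and $q^*(\eps)\to0$ (the paper handles the same issue via \eqref{eq:soineedthis}).
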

\begin{proof}
We denote 
\begin{equation}\label{eq:4343regf536}
\la(\eps):=\eps^{-d} q^* (\eps).
\end{equation}
One can rewrite then \eqref{eq:mcrchar} as follows
\begin{equation}\label{eq:2345gsdx436}
\la(\eps)-\frac{1}{ \kam   }
	\int_\X\frac{\hat{a}^+(\xi)-\eps^d \la(\eps)\hat{a}^-(\xi)}{ \kap       -\hat{a}^+(\xi)+\eps^d \la(\eps)\hat{a}^-(\xi)}\hat{a}^-(\xi)\,d\xi+r(\eps)=0.
\end{equation} 

\emph{Step 1.} Note that, since $d\geq3$, we have $|I|<\infty$, by the arguments above. We set
\[
\la_3:=\frac{|I|}{\kam }\in(0,\infty),
\]
Let $\delta\in\bigl(0,\min\{\la_3,1\}\bigr)$ be such that, cf. \eqref{eq:qepsbdd},
\begin{equation}\label{eq:condondelta3}
(\la_3+\delta)\delta^d<\frac{\kap -m}{\kam },
\end{equation}
and let also $r(\eps)$, given by \eqref{r1eps}, be continuous on $(0,\delta)$. We set then
\begin{equation}\label{eq:sarqr33434}
r(0):=0=\lim\limits_{\eps\to0+}r(\eps), \qquad r(-\eps):=r(\eps), \quad \eps\in(0,\delta).
\end{equation}

For 
\[
(\la,\eps)\in E_\delta:=(\la_3- \delta,\la_3 +\delta)\times (- \delta,\delta),
\]
we consider the function
\[
	F(\la,\eps):=\frac{1}{ \kam   }
	\int_\X\frac{\hat{a}^+(\xi)-|\eps|^d \la\, \hat{a}^-(\xi)}{ \kap       -\hat{a}^+(\xi)+|\eps|^d \la\, \hat{a}^-(\xi)}\hat{a}^-(\xi)\,d\xi-\la \sgn(I) -r(\eps).
\]
Henceforth $\sgn(I)=1$ for $I>0$ and $\sgn(I)=-1$ for $I<0$.

For $(\la,\eps)\in E_\delta$ and $\delta$ as in \eqref{eq:condondelta3}, we have that $a^+(x)-\la|\eps|^d a^-(x)\geq 0$, $x\in\X$. Hence one can apply Proposition~\ref{eq:limitofpstar} for a (new) function $\mcr(\eps)$ such that $|\eps|^d \la=\frac{\kap -\mcr (\eps)}{\kam }$. It yields then
\begin{equation}\label{eq:Fla0}
\lim_{\eps\to0}F(\la,\eps)=F(\la,0)=\frac{1}{\kam }I-\la \sgn(I),
\end{equation}
and since $|I|\sgn(I)=I$, we get 
\begin{equation}\label{eq:evneed}
	F(\la_3,0)=0.
\end{equation}

\emph{Step 2.} By \eqref{eq:Fla0} and the dominated convergence theorem, $F$ is continuous on $E_\delta$, for small enough $\delta>0$. 
Prove that $\frac{\partial }{\partial \la}F(\la,\eps)$ 
is continuously differentiable on $E_\delta$, for small enough $\delta>0$. 
We have 
\begin{equation}\label{eq:sagewetew}
\frac{\partial }{\partial \la}F(\la,\eps)=-\frac{1}{ \kam   }
	\int_\X\frac{|\eps|^d \, \hat{a}^-(\xi)}{ \bigl(\kap       -\hat{a}^+(\xi)+|\eps|^d \la\, \hat{a}^-(\xi)\bigr)^2}\hat{a}^-(\xi)\,d\xi-\sgn(I),
\end{equation}
that is continuous for $(\la,\eps)\in E_\delta$, $\eps\neq0$. 

For $\eps=0$, $\la\in(\la_3- \delta, \la_3+\delta)$, we have by \eqref{eq:Fla0},
\begin{equation}\label{eq:partder}
\frac{\partial }{\partial \la}F(\la,0)=\lim_{h\to0}\frac{F(\la+h,0)-F(\la,0)}{h}=-\sgn(I)\neq0.
\end{equation}

By \eqref{eq:dblestlemma}, the denominator in \eqref{eq:sagewetew} behaves as $|\xi|^4$ near the origin that is integrable for $d\geq5$ only. In the latter case, using again \eqref{eq:dblestlemma} and the dominated convergence theorem, we get from \eqref{eq:sagewetew} that
\begin{equation}\label{eq:r23233524}
\lim_{\eps\to0} \frac{\partial }{\partial \la}F(\la,\eps)=-\sgn(I),
\end{equation}
hence, by \eqref{eq:partder}, $\frac{\partial }{\partial \la}F$ is continuous at $(\la,0)$ for all $\la\in(\la_3- \delta,\la_3 + \delta)$.

Let now $d=3,4$. Find $\lim_{\eps\to0} \frac{\partial }{\partial \la}F(\la,\eps)$. By the same arguments as for getting \eqref{eq:estextball}, we obtain that 
\[
	\int_{\X\setminus \Delta_\delta}\frac{\bigl(\hat{a}^-(\xi)\bigr)^2}{ \bigl(\kap       -\hat{a}^+(\xi)+|\eps|^d \la\, \hat{a}^-(\xi)\bigr)^2}\,d\xi<\infty,
\]
for any neighbourhood $\Delta_\delta$ of the origin, $0\in\Delta_\delta\subset\X$, of a positive Lebesgue measure.
Therefore, for any such $\Delta_\delta$ with small enough $\delta>0$, 
\[
\lim_{\eps\to0}
\frac{\partial }{\partial \la}F(\la,\eps)=-\frac{1}{ \kam   }
\lim_{\eps\to0} |\eps|^d h(\eps,\delta)-\sgn(I),
\]
where
\[
h(\eps, \delta):= 
	\int_{\Delta_\delta}
	\frac{\bigl(\hat{a}^-(\xi)\bigr)^2}{ \bigl(\kap       -\hat{a}^+(\xi)+|\eps|^d \la\, \hat{a}^-(\xi)\bigr)^2}\,d\xi.
\]
We define now
\begin{equation}\label{eq:Deltadelta}
	\Delta_\delta:=\{\xi\in\X : |B^+\xi|\leq \delta\},
\end{equation}
that is just an image of the ball $\{|\xi|\leq \delta\}$ under the mapping generated by a Hermitian positive definite matrix $D:=(B^+)^{-1}$. Note that $\det(D)>0$. Since $D $ generates a bounded continuous linear mapping on $\X$, $\Delta_\delta$ is a bounded neighbourhood of the origin. Then
\[
	h(\eps, \delta)= 
	\int_{\{|\xi|\leq \delta\}}
	\frac{\bigl(\hat{a}^-(D \xi)\bigr)^2}{ \bigl(\kap       -\hat{a}^+(D \xi)+|\eps|^d \la\, \hat{a}^-(D \xi)\bigr)^2}\,\det(D )d\xi\geq0.
\] 
The inequality \eqref{eq:dblest}, applied for $B=D $,
implies that $o(|D \xi|^2)=o(|\xi|^2)$ for $|\xi|\to0$. Then, by \eqref{eq:Texpcf}, for small enough $\delta>0$ and $|\xi|\leq \delta$, there exist $c_1,c_2>0$ 
\begin{equation}\label{eq:estorignew}
\begin{gathered}
c_1|\xi|^2\leq \kap-\hat{a}^+(D \xi)=2\pi^2|\xi|^2+o(|\xi|^2)\leq c_2 |\xi|^2,\\
\frac{\kam}{2}\leq \hat{a}^-(D \xi)=\kam+o(1)\leq \kam.
\end{gathered}
\end{equation}
Then, there exist $C_1,C_2,C_3,C_4>0$, such that
\begin{align*}
h(\eps,\delta)&\leq \int_{\{|\xi|\leq \delta\}}
	\frac{C_1}{ \bigl(|\xi|^2+|\eps|^d \la C_2\bigr)^2}\,d\xi
\\&\leq C_3\int_0^\delta \frac{r^{d-1}}{ \bigl(r^2+|\eps|^d \la C_2\bigr)^2}\,dr\leq C_3 \delta^{d-3}\int_0^\delta \frac{r^2 }{ \bigl(r^2+|\eps|^d \la C_2\bigr)^2}\,dr\\
&=C_4
\biggl(\frac{1}{\sqrt{|\eps|^d \la C_2}}
\arctan\Bigl( \frac{\delta}{\sqrt{|\eps|^d \la C_2}}\Bigr)-
\frac{\delta}{	(|\eps|^d \la C_2+\delta^2)}\biggr).
\end{align*}
As a result, $|\eps|^d h(\eps,\delta)\leq C_5|\eps|^{\frac{d}{2}}$ for some $C_5>0$, and hence \eqref{eq:r23233524} holds. Therefore, $\frac{\partial }{\partial \la}F(\la,\eps)$ is continuous at $(\la,0)$ as well.

\emph{Step 3.} As a result, $F(\la,\eps)$ is continuous on $E_\delta$ and continuously differentiable in $\la$ for small enough $\delta>0$. Since also \eqref{eq:evneed} holds, we conclude, by the implicit function theorem, that there exists (possibly smaller) $\delta>0$ and a unique function $\la=\la(\eps)$, $\eps\in(-\delta,\delta)$, such that 
$\la(0)=\la_3$ and
\begin{equation}\label{eq:23523tgmdsd}
F(\la(\eps),\eps)=0, \qquad \eps\in(-\delta,\delta);
\end{equation}
moreover, $\la(\eps)$ is continuous in $\eps\in(-\delta,\delta)$. The latter implies
\begin{equation*}
\la(\eps)=\la_3+o(1), \quad \eps\to0.
\end{equation*}
Since \eqref{eq:23523tgmdsd} implies \eqref{eq:2345gsdx436}, we get that
\[
	q^*(\eps)= \frac{|I|}{ \kam   } \eps^{d} +o(\eps^{d}).
\]
In particular, \eqref{eq:mcritconv} holds, and then, by \eqref{eq:pstar0}, we get from \eqref{eq:mcrchar} that $|I|=I$, i.e. $I>0$. Thus, one has
\eqref
{eq:asymptdgeq3} and, by \eqref{eq:qepsbdd}, we get also \eqref{eq:mcrdgeq3}.

\emph{Step 4.} Assume now, additionally, that $r(\eps)$ is continuously differentiable for $\eps\in(0,\delta)$ and that $r'(0):=\lim_{\eps\to0+}r'(\eps)<\infty$. Then \eqref{eq:sarqr33434} extends this to $\eps\in(-\delta,\delta)$. We have then that
\[
	\frac{\partial }{\partial \eps}F(\la,\eps)=-\frac{d}{ \kam   }
	\int_\X\frac{\eps|\eps|^{d-2} \,\la \, \hat{a}^-(\xi)}{ \bigl(\kap       -\hat{a}^+(\xi)+|\eps|^d \la\, \hat{a}^-(\xi)\bigr)^2}\hat{a}^-(\xi)\,d\xi-r'(\eps),
\]
is continuous for $(\la,\eps)\in E_\delta$, $\eps\neq0$. The same arguments as above show that, for $d\geq5$, $\frac{\partial }{\partial \eps}F$ is continuous at $(\la,0)$, $\la\in(\la_3- \delta,\la_3+\delta)$ with
\[
\frac{\partial }{\partial \eps}F(\la,0)=-r'(0).
\] 
For $d=3,4$, we have, by the same arguments as the above,
\begin{equation}\label{eq:adsqwwqr}
  \lim_{\eps\to0} \frac{\partial }{\partial \eps}F(\la,\eps)=C_6\lim_{\eps\to0}
\eps|\eps|^{d-2}O\bigl(|\eps|^{-\frac{d}{2}}\bigr)-r'(0)=-r'(0).
\end{equation}
as $d-1>\frac{d}{2}$ for $d\geq3$. Next,
\begin{gather*}
 \frac{\partial }{\partial \eps}F(\la,0)=\lim_{\eps\to0}\frac{F(\la,\eps)-F(\la,0)}{\eps}=\frac{1}{  \kam   }\lim_{\eps\to0}f(\la,\eps)-r'(0),
 \shortintertext{where}
\begin{aligned}
 f(\la,\eps):&=\frac{1}{ \eps}
	\int_\X\biggl(\frac{\hat{a}^+(\xi)-|\eps|^d \la\, \hat{a}^-(\xi)}{ \kap       -\hat{a}^+(\xi)+|\eps|^d \la\, \hat{a}^-(\xi)}-
\frac{\hat{a}^+(\xi)}{ \kap       -\hat{a}^+(\xi)}\biggr)
	\hat{a}^-(\xi)\,d\xi \\
	&=-\frac{\kap  \la|\eps|^d}{ \eps}
	\int_\X b(\la,\eps,\xi) \,d\xi,
\end{aligned}
\\\shortintertext{and}
b(\la,\eps,\xi):=\frac{\hat{a}^-(\xi)}{\bigl(\kap       -\hat{a}^+(\xi)+|\eps|^d \la\, \hat{a}^-(\xi)\bigr)\bigl( \kap       -\hat{a}^+(\xi)\bigr)}.
\end{gather*}
Since $d>1$, by re-choosing $\delta>0$, we get, similarly to the arguments above, that
\begin{align*}
 \lim_{\eps\to0}f(\la,\eps)=-\kap  \la\lim_{\eps\to0}\frac{|\eps|^d}{ \eps}
 \int_{\Delta_\delta} b(\la,\eps,\xi)\,d\xi,
 \end{align*}
where $\Delta_\delta$ is given by \eqref{eq:Deltadelta}.
By the change of variables and \eqref{eq:estorignew}, we have, for small enough $\delta>0$, and some constants $c_1,c_2,c_3,c_4>0$,
\begin{align*}
&\quad \biggl\lvert \frac{|\eps|^d}{ \eps}
 \int_{\Delta_\delta} b(\la,\eps,\xi)\,d\xi \biggr\rvert \\&\leq |\eps|^{d-1}\kam \det (D) \int_{\{|\xi|\leq d\}}
  \frac{1}{\bigl(c_1|\xi|^2+\frac{\kam}{2}|\eps|^d \la \bigr)\bigl( c_1|\xi|^2\bigr)}\,d\xi\\
  &\leq |\eps|^{d-1} c_2 \int_{\{|\xi|\leq d\}}
  \frac{1}{\bigl(|\xi|^2+c_3\la |\eps|^d   \bigr)|\xi|^2}\,d\xi=|\eps|^{d-1} c_4 \int_0^\delta
  \frac{r^{d-1}}{\bigl(r^2+c_3\la |\eps|^d   \bigr)r^2}\,d\xi\\
   &\leq |\eps|^{d-1} c_4 \delta^{d-3}\int_0^\delta
  \frac{1}{\bigl(r^2+c_3\la |\eps|^d   \bigr)}\,d\xi\\
  &=c_4\delta^{d-3}|\eps|^{d-1}\frac{1}{\sqrt{c_3 \la|\eps|^d}}\arctan\frac{\delta}{\sqrt{c_3 \la|\eps|^d}}\to0, \qquad \eps\to0.
\end{align*}
Therefore, for $d=3,4$, we also have
\[
\frac{\partial }{\partial \eps}F(\la,0)=-r'(0)
\]
and hence $\frac{\partial }{\partial \eps}F$ is continuous at $(\la,0)$ for $\la\in(\la_3- \delta,\la_3+\delta)$. 

As a result, both partial derivatives $\frac{\partial }{\partial \la}F$ and $\frac{\partial }{\partial \eps}F$ are continuous on $E_\delta$, hence $F$ is continuously differentiable on $E_\delta$.

Then, the implicit function theorem ensures that the unique function $\la=\la(\eps)$, $\eps\in(-\delta,\delta)$, such that $\la(0)=\la_3$ and \eqref{eq:23523tgmdsd} holds, is continuously differentiable in $\eps$. Differentiating \eqref{eq:23523tgmdsd} in $\eps$, we get
\[
\la'(\eps)\frac{\partial }{\partial \la}F(\la(\eps),\eps)+\frac{\partial }{\partial \eps}F(\la(\eps),\eps)=0,
\]
and hence, by \eqref{eq:partder}, \eqref{eq:adsqwwqr}
\begin{align*}
\la'(0)&=\frac{\partial }{\partial \eps}F(\la(\eps),\eps)\biggr\rvert_{\eps=0}=-r'(0).
\end{align*}
As a result,
\[
\la(\eps)= \la_3 - r'(0)\eps +o(\eps),
\]
that, by \eqref{eq:4343regf536} implies \eqref{eq:32235352dfgfdg}. 
\end{proof}

\begin{remark}\label{eq:remindecoef}
Note that $\la_3=\frac{I}{\kap}$, where $I$ is given by \eqref{eq:int}, does not actually depend on $\kap$ nor $\kam$.
\end{remark}

\section{Asymptotics of the critical mortality: \texorpdfstring{$d= 2$}{d=2}}\label{sec:asympd2}

Let $d=2$ and \eqref{eq:mcrchar} hold. 
We start with some heuristic arguments. Let, additionally, \eqref{eq:mcritconv} hold; then, by Proposition~\ref{eq:limitofpstar},  $\lim_{\eps\to0} p^*(\eps)=-\infty$. 
By~\eqref{eq:mcrchar}, \eqref{eq:mcritconv}, we can expect then that
\begin{equation}\label{eq:anz}
q^*(\eps)\approx\eps^2 l(\eps),
\end{equation}
where $l(\eps)\sim p^*(\eps)$, $\eps\to0$, and therefore,
\begin{equation}\label{eq:heur}
\lim_{\eps\to0} l(\eps)=\infty, \qquad \lim_{\eps\to0} \eps^2 l(\eps)=0.
\end{equation}
Then, by \eqref{eq:pepsint}, the anzatz \eqref{eq:anz} implies 
\begin{align*}
l(\eps)&\sim \frac{1}{ \kam   }
	\int_{\R^2}\frac{\hat{a}^+(\xi)-\eps^2 l(\eps)\, \hat{a}^-(\xi)}{ \kap       -\hat{a}^+(\xi)+\eps^2 l(\eps)\, \hat{a}^-(\xi)}\hat{a}^-(\xi)\,d\xi\\&=\frac{\kap }{\kam }
\int_{\R^2} \frac{1}{\kap -\bigl(\hat{a}^+ (\xi)- \eps^2 l(\eps) \hat{a}^- (\xi)\bigr)}\hat{a}^-(\xi)\,d\xi-\frac{1}{\kam }
\int_{\R^2} \hat{a}^-(\xi)\,d\xi.
\end{align*}

By the same arguments as above, the singularity of the latter expression as $\eps\to0$ is fully determined by the integral
\[
\sigma(\delta,\eps)	:=
\frac{\kap}{\kam }
\int_{\Delta_\delta} \frac{1}{\kap -\bigl(\hat{a}^+ (\xi)- \eps^2 l(\eps) \hat{a}^- (\xi)\bigr)}\hat{a}^-(\xi)\,d\xi
\]
for small enough $\delta>0$, where $\Delta_\delta$ is given by \eqref{eq:Deltadelta}. By \eqref{eq:estorig} and the change of variables as above, 
\begin{align*}
	\sigma(\delta,\eps)\sim\const \cdot
\int_{|\xi|\leq \delta} \frac{1}{|\xi|^2+ \kam  \eps^2 l(\eps)}\,d\xi=\const \cdot\int_0^\delta  \frac{r}{r^2+ \kam  \eps^2 l(\eps)}\,dr.
\end{align*}
Integrating, we conclude that, heuristically, for some $c_1,c_2>0$,
\begin{align*}
 l(\eps)&\sim c_1 \log\biggl(1+\frac{c_2}{\eps^2 l(\eps)}\biggr)\sim c_1\log \frac{c_2}{\eps^2 l(\eps)}, 
\end{align*}
by \eqref{eq:heur}, i.e. $\frac{l(\eps)}{c_1} e^{\frac{l(\eps)}{c_1}}\sim \frac{c_2}{c_1\eps^2}$. 
Therefore,
\[
	l(\eps)\sim c_1 W\biggl( \frac{c_2}{c_1\eps^2} \biggr),
\]
where $W(z)$, $z>0$, is the unique solution to the equation $ye^y=z$, $y>0$, (the principal branch of) of the so-called Lambert~$W$ function. It is well-known that
\begin{equation*}
W(z)= \log z - \log \log z+o(1), \qquad z\to+\infty.
\end{equation*}
Therefore, 
\[
	W\biggl( \frac{c_2}{c_1\eps^2} \biggr)=-2\log\eps -\log(-\log\eps)+O(1),
\]
in other words, two leading terms of the asymptotics of $q^*(\eps)\approx c'\eps^2 W(c\eps^{-2})$ depend on $c'$ but not on $c$. This gives a hint to define $\la(\eps)$ in the proof of the following theorem.

\begin{theorem}\label{thm:assympd2}
Let $d=2$ and \eqref{eq:newA1}--\eqref{eq:secmom} hold. Let \eqref{eq:mcrchar} hold with $o(\eps^2)$ such that the function
\begin{equation}
r(\eps):=\frac{o(\eps^2)}{\eps^2 W(\eps^{-2})} \label{r2eps}
\end{equation}
is continuous for small $\eps>0$.  Then 
\begin{equation}
\label{eq:asymptford2}
q^*(\eps)=\la_2\eps^2W(\eps^{-2})+o\bigl(\eps^2 W(\eps^{-2})\bigr),
\end{equation}
where
\begin{equation}\label{eq:la2}
\la_2:=\dfrac{\kap }{2\pi \sqrt{a_{11}a_{22}-a_{12}^2}},
\end{equation}
and $a_{ij}$, $1\leq i,j\leq 2$, are given by \eqref{eq:defofaplus}.
\end{theorem}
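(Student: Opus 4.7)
The plan is to mimic the structure of the proof of Theorem~\ref{thm:assympd3}, but with the ansatz $q^*(\eps)=\eps^2 W(\eps^{-2})\la(\eps)$ tailored to the logarithmic divergence of $p^*(\eps)$ in two dimensions that was diagnosed in Proposition~\ref{eq:limitofpstar}. Setting $\la(\eps):=q^*(\eps)/(\eps^2 W(\eps^{-2}))$ and dividing \eqref{eq:mcrchar} by $\eps^2 W(\eps^{-2})$ converts the extinction relation into
\[
F(\la(\eps),\eps):=\la(\eps)+\frac{p^*(\eps)}{W(\eps^{-2})}+r(\eps)=0,
\]
where in \eqref{eq:pepsint} the quantity $q^*(\eps)$ inside $\hat{J}_\eps$ is treated as $\eps^2 W(\eps^{-2})\la$. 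If $F$ extends continuously to $\eps=0$ with $F(\la,0)=\la-\la_2$ on a neighbourhood of $\la_2$, then since $\partial_\la F(\la_2,0)=1$ a continuity-plus-monotonicity argument (equivalently, the continuous implicit function theorem) delivers a unique $\la(\eps)\to\la_2$, which is \eqref{eq:asymptford2}.

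The heart of the proof is the uniform limit $\lim_{\eps\to0} p^*(\eps)/W(\eps^{-2})=-\la_2$. Following the splitting used in Proposition~\ref{eq:limitofpstar}, I write $p^*(\eps)=p^*_{\leq\delta}(\eps)+p^*_{\geq\delta}(\eps)$. The outer piece is bounded uniformly in $\eps$ by the argument leading to \eqref{eq:estextball}, hence it is negligible after dividing by $W(\eps^{-2})\to\infty$. For the inner piece I apply the algebraic identity $\hat{J}_\eps(\xi)/(\kap-\hat{J}_\eps(\xi))=\kap/(\kap-\hat{J}_\eps(\xi))-1$ to isolate the singular contribution, use \eqref{eq:Texpcf} so that $\kap-\hat{J}_\eps(\xi)\approx 2\pi^2|B^+\xi|^2+q^*(\eps)\kam$ near the origin, and change variables via $\eta=\sqrt{2}\pi B^+\xi$ whose Jacobian equals $1/(2\pi^2\sqrt{a_{11}a_{22}-a_{12}^2})$. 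In polar coordinates the elementary identity $\int_0^{r_0} r(r^2+c)^{-1}dr=\frac12\log(1+r_0^2/c)$ gives
\[
-\kam p^*(\eps)=\frac{\kap\kam}{2\pi\sqrt{a_{11}a_{22}-a_{12}^2}}\log\frac{1}{q^*(\eps)}+O(1)=\kam\la_2\log\frac{1}{\eps^2 W(\eps^{-2})\la}+O(1),
\]
with the $O(1)$ term bounded uniformly for $\la$ on compact subsets of $(0,\infty)$.

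It remains to divide by $W(\eps^{-2})$. From the defining relation $W(z)e^{W(z)}=z$ with $z=\eps^{-2}$ I obtain $\log W(\eps^{-2})=-2\log\eps-W(\eps^{-2})=o(W(\eps^{-2}))$, hence
\[
\frac{\log(1/q^*(\eps))}{W(\eps^{-2})}=\frac{-2\log\eps-\log W(\eps^{-2})-\log\la}{W(\eps^{-2})}\xrightarrow{\eps\to0}1,
\]
uniformly in $\la$ on a neighbourhood of $\la_2$. Combined with the previous step this yields $F(\la,0)=\la-\la_2$, and the conclusion follows from the implicit function theorem as in Theorem~\ref{thm:assympd3}.

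The main obstacle I anticipate is the careful bookkeeping of the $O(1)$ error in the near-origin estimate: the Taylor remainder $o(|\xi|^2)$ in \eqref{eq:Texpcf} appearing in the denominator, the constant $-1$ produced by the algebraic identity, and the deviation of $\hat{a}^-(\xi)$ from $\kam$ must together produce only a contribution that is bounded uniformly for $\la$ in a neighbourhood of $\la_2$. It is this uniformity that allows the implicit function theorem to be applied at $\eps=0$; the identification of the implicit function theorem branch with the physical $\la(\eps)=q^*(\eps)/(\eps^2 W(\eps^{-2}))$ is then handled exactly as in Theorem~\ref{thm:assympd3}.
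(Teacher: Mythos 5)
Your proposal is correct and follows essentially the same route as the paper: the ansatz $q^*(\eps)=\la(\eps)\eps^2W(\eps^{-2})$, the rescaled function $F(\la,\eps)$, the near-origin change of variables via $B^+$ with the logarithmic integral and the identity $\eps^2W(\eps^{-2})=e^{-W(\eps^{-2})}$ to get $F(\la,0)=\la-\la_2$, and an implicit-function-theorem conclusion. The only deviations are cosmetic: you track an explicit $O(1)$ error where the paper uses two-sided $\rho$-bounds, and your monotonicity-in-$\la$ shortcut (valid, since $\partial_\la F\geq 1$ for $\eps\neq0$) replaces the paper's explicit verification that $\partial_\la F$ is continuous up to $\eps=0$.
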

\begin{remark}\label{eq:remindecoef2}
Note that $\la_2$ does not actually depend on $\kap$ (cf. Remark~\ref{eq:remindecoef}). 
\end{remark}
\begin{proof}
We define $\la(\eps)>0$, $\eps\in(0,1)$, through the equality
\begin{equation}\label{eq:expofqlog}
q^*(\eps)=\la(\eps)\eps^2W(\eps^{-2}).
\end{equation}
One can rewrite then \eqref{eq:mcrchar} as follows
\[
	\la(\eps)-\frac{1}{ \kam  W(\eps^{-2})}
	\int_{\R^2}\frac{\hat{a}^+(\xi)-  \eps^2W(\eps^{-2})\la(\eps)\,   \hat{a}^-(\xi)}{ \kap       -\hat{a}^+(\xi)+\eps^2W(\eps^{-2}) \la(\eps)\, \hat{a}^-(\xi))}\hat{a}^-(\xi)\,d\xi+r(\eps)=0,
\]
where $r(\eps)\to0$, $\eps\to0$, is given by \eqref{r2eps}

Let $\delta\in\bigl(0,\min\{\la_2,1\}\bigr)$ be such that, cf. \eqref{eq:qepsbdd}, \eqref{eq:condondelta3},
\begin{equation}\label{eq:condondelta}
(\la_2+\delta)\delta^2 W(\delta^{-2})<\frac{\kap -m}{\kam },
\end{equation}
and let also $r(\eps)$ be continuous on $(0,\delta)$. Note that the function in the left hand side of \eqref{eq:condondelta} is increasing in $\delta>0$. We define then $r$ on $(-\delta,0]$ as in \eqref{eq:sarqr33434}.
For $\la\in (\la_2- \delta,\la_2 +\delta)$, $\eps\in (- \delta,\delta)$, $\eps\neq0$,
we consider the function
\begin{align}
F(\la,\eps):&=\frac{1}{ \kam   W(\eps^{-2})}
	\int_{\R^2}\frac{\hat{a}^+(\xi)-\la  \eps^2 W(\eps^{-2}) \, \hat{a}^-(\xi)}{ \kap       -\hat{a}^+(\xi)+\la  \eps^2 W(\eps^{-2}) \, \hat{a}^-(\xi)}\hat{a}^-(\xi)\,d\xi-\la -r(\eps)\notag\\
	&=\frac{\kap }{ \kam    W(\eps^{-2})}
	\int_{\R^2}\frac{1}{ \kap       -\hat{a}^+(\xi)+\la  \eps^2 W(\eps^{-2})\, \hat{a}^-(\xi)}\hat{a}^-(\xi)\,d\xi \notag\\&\quad -\frac{1}{ \kam  W(\eps^{-2}) }	\int_{\R^2} \hat{a}^-(\xi)\,d\xi-\la -r(\eps).\label{eq:defFleps2}
\end{align}
Clearly, $F$ is continuous for $\la\in (\la_2- \delta,\la_2 +\delta)$, $\eps\in (- \delta,\delta)$, $\eps\neq0$.

By the same arguments as above,
\begin{gather}\label{eq:expchm}
 \lim_{\eps\to0}F(\la,\eps)=-\la+\lim_{\eps\to0}\frac{1}{W(\eps^{-2})}\sigma(\eps,\delta,\la),
 \\\intertext{where, for a small enough $\delta>0$,}
\sigma(\eps,\delta,\la):=\frac{\kap }{\kam }\int_{\Delta_\delta}\frac{1}{ \kap       -\hat{a}^+(\xi)+\la  \eps^2 W(\eps^{-2})\, \hat{a}^-(\xi)}\hat{a}^-(\xi)\,d\xi,\notag
\end{gather}
and $\Delta_\delta$ is given by \eqref{eq:Deltadelta}.

Recall that the inequality \eqref{eq:dblest}, applied for $B=D $,
implies that $o(|D \xi|^2)=o(|\xi|^2)$ for $|\xi|\to0$. Then, cf.~\eqref{eq:estorignew}, by \eqref{eq:Texpcf}, for any $\rho\in(0,1)$ there exists $\delta_\rho>0$ small enough such that, for $|\xi|\leq\delta<\delta_\rho$, 
\begin{equation}\label{eq:estorig}
\begin{gathered}
2\pi^2(1- \rho)|\xi|^2\leq \kap-\hat{a}^+(D \xi)=2\pi^2|\xi|^2+o(|\xi|^2)\leq 2\pi^2(1+ \rho) |\xi|^2,\\
(1-\rho)\kam\leq \hat{a}^-(D \xi)=\kam+o(1)\leq \kam.
\end{gathered}
\end{equation}

By change of variables and \eqref{eq:estorig}, for each $\rho\in(0,1)$, there exists $\delta_\rho<\la_2$, such that, for all $\delta\in(0,\delta_\rho)$,
\begin{equation}\label{eq:dblestsigma}
\frac{1}{1-\rho}\tau(\eps,\delta,\la)\geq \sigma(\eps,\delta,\la)\geq \frac{1-\rho}{1+\rho} \tau(\eps,\delta,\la),
\end{equation}
where, for $D=(B^+)^{-1}$,
\begin{align*}
\tau(\eps,\delta,\la):&= \kap \det(D )\int_{|\xi|\leq \delta}\frac{1}{ 2\pi^2 |\xi|^2+\kam \la  \eps^2 W(\eps^{-2})}\,d\xi\\&=2\kap \pi \det(D ) \int_0^\delta \frac{1}{ 2\pi^2 r^2+\kam \la  \eps^2 W(\eps^{-2})}\,r\, dr\\
&=\frac{\kap }{2\pi} \det(D ) \log\biggl(1 + \frac{2\pi^2\delta^2}{\kam \la  \eps^2 W(\eps^{-2})} \biggr).
\end{align*}
Note that $W(\eps^{-2})e^{W(\eps^{-2})}=\eps^{-2}$, i.e.
$\eps^2 W(\eps^{-2})=e^{-W(\eps^{-2})}$. Set $R:=W(\eps^{-2})\to+\infty$, $\eps\to0$. Then
\begin{align*}
&\quad \frac{1}{W(\eps^{-2})}\tau(\eps,\delta,\la)
	=\frac{ \kap \det(D ) }{2\pi R}\log\biggl(1 + \frac{2\pi^2\delta^2}{\kam \la  } e^{R}\biggr)\\
	&=\frac{\kap  \det(D ) }{2\pi R}\Biggl(\log\frac{2\pi^2\delta^2}{\kam \la  }+R+\log\biggl(1 + \frac{\kam \la  }{2\pi^2\delta^2} e^{-R}\biggr)\Biggr)\to \frac{ \kap \det(D ) }{2\pi},
\end{align*}
as $R\to+\infty$, i.e. as $\eps\to0$. Combining this with \eqref{eq:expchm} and \eqref{eq:dblestsigma}, we conclude that, for each $\rho\in(0,1)$,
\[
 \lim_{\eps\to0}F(\la,\eps)+\la\in\biggl( \frac{\kap \det(D )}{2\pi}\frac{1-\rho}{1+\rho}, \frac{\kap \det(D )}{2\pi} \frac{1}{1-\rho}\biggr).
\]
By sending $\rho$ to $0$, we get
\[
\lim_{\eps\to0}F(\la,\eps)=-\la+\frac{\kap \det(D )}{2\pi} =-\la+\frac{\kap }{2\pi\det(B^+)}=-\la+\la_2,
\]
where $\la_2$ is given by \eqref{eq:la2}, since $(B^+)^2=A^+$ implies $\det(B^+)=\sqrt{\det(A^+)}$.

Therefore, if we set
\[
F(\la,0):=\la_2-\la, \qquad\la\in(\la_2- \delta,\la_2+\delta),
\]
then $F(\la,\eps)$ becomes a continuous function on
\[
E_\delta:=(\la_2- \delta,\la_2 +\delta)\times (- \delta,\delta)
\]
with a small enough $\delta\in(0,\la_2)$. Moreover, $F(\la_2,0)=0$. Next, since
\[
\frac{F(\la+h,0)-F(\la,0)}{h}=-1, \qquad \la,\la+h\in(\la_2- \delta,\la_2+\delta),
\]
we have 
\begin{equation*}
\frac{\partial}{\partial \la}F(\la,0)=-1\neq0, \qquad \la
\in(\la_2- \delta,\la_2+\delta).
\end{equation*}
Next, for $(\la,\eps)\in E_\delta$, $\eps\neq0$, we have, by \eqref{eq:defFleps2},
\begin{align*}
\frac{\partial}{\partial \la}F(\la,\eps)&=-1- \frac{1}{ \kam   W(\eps^{-2})}
	\int_{\R^2}\frac{  \eps^2 W(\eps^{-2})\, \hat{a}^-(\xi)}{ \bigl(\kap       -\hat{a}^+(\xi)+\la  \eps^2 W(\eps^{-2}) \, \hat{a}^-(\xi)\bigr)^2}\hat{a}^-(\xi)\,d\xi\\
	&=-1- \frac{ \eps^2 }{ \kam   }
	\int_{\R^2}\frac{ \bigl(\hat{a}^-(\xi)\bigr)^2}{ \bigl(\kap       -\hat{a}^+(\xi)+\la  \eps^2 W(\eps^{-2})\, \hat{a}^-(\xi)\bigr)^2}\,d\xi.
\end{align*}
By the same arguments as above,
\begin{gather*}
\lim_{\eps\to0}
\frac{\partial }{\partial \la}F(\la,\eps)=-1-\frac{1}{ \kam   }
\lim_{\eps\to0}  \eps^2  h(\eps,\delta),
\shortintertext{where}
h(\eps, \delta)= 
	\int_{\Delta_\delta}
	\frac{\bigl(\hat{a}^-(\xi)\bigr)^2}{ \bigl(|B^+\xi|^2+o(|\xi|^2)+\la  \eps^2 W(\eps^{-2}) (\kam +o(1))\bigr)^2}\,d\xi,
\end{gather*}
where $\Delta_\delta$ is given by \eqref{eq:Deltadelta}. By change and variables and \eqref{eq:estorignew},  we get that, for some $C_1,C_2,C_3>0$ and for small enough $\delta>0$,
\begin{align*}
0<h(\eps,\delta)&\leq \int_{\{|\xi|\leq \delta\}}
	\frac{C_1}{ \bigl(|\xi|^2+ \la\eps^2 W(\eps^{-2}) C_2\bigr)^2}\,d\xi\\
&\leq C_3\int_0^\delta \frac{r }{ \bigl(r^2+ \la\eps^2   W(\eps^{-2})C_2\bigr)^2}\,dr\\
&=\frac{C_3}{2C_2}\frac{\delta^2}{ \la \eps^2  W(\eps^{-2}) \bigl(\delta^2+ \la\eps^2  W(\eps^{-2}) C_2\bigr)},
\end{align*}
and hence $ \eps^2 h(\eps,\delta)\to0$, $\eps\to0$, that yields
\[
\lim_{\eps\to0}
\frac{\partial }{\partial \la}F(\la,\eps)=-1,
\]
and therefore, $\frac{\partial }{\partial \la}F$ is continuous on $E_\delta$.

Again, the implicit function theorem states that there exists a unique function $\la=\la(\eps)$, $\eps\in(-\delta,\delta)$ (with, possibly, smaller $\delta$), such that
$\la(0)=\la_2$ and
\[
F(\la(\eps),\eps)=0, \qquad\eps\in(-\delta,\delta);
\] 
moreover, $\la(\eps)$ is \emph{continuous} in $\eps\in(-\delta,\delta)$.
Therefore, $\la(\eps)=\la_2+o(1)$, $\eps\to0$; hence, from \eqref{eq:expofqlog} and \eqref{eq:qepsbdd}, we get \eqref{eq:asymptford2}.
\end{proof}

\begin{corollary}
If function $a^+$ in Theorem~\ref{thm:assympd2} is radially symmetric, i.e. $a^+(x)=b^+(|x|)$ for some $b^+:\R\to\R$, then $a_{12}=a_{21}=0$ and 
\[
	a_{11}=a_{22}=\int_{\R^2} x_1^2 a^+(x)\,dx=\frac{1}{2}\int_{\R^2}|x|^2 a^+(x)\,dx,
\] 
so that
\begin{equation*}
\la_2=\frac{\kap}{\pi \int_{\R^2}|x|^2 a^+(x)\,dx}.
\end{equation*}
\end{corollary}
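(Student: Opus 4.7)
The proof is a direct computation from the definition of $a_{ij}$ in \eqref{eq:defofaplus} combined with the radial symmetry hypothesis, so there is no real obstacle; the plan is essentially to evaluate the three coefficients $a_{11},a_{22},a_{12}$ and substitute into the formula \eqref{eq:la2} for $\la_2$.

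First I would handle the off-diagonal coefficient $a_{12}$. Since $a^+(x)=b^+(|x|)$, the integrand $x_1 x_2 a^+(x)$ is odd under the reflection $x_1\mapsto -x_1$ (which preserves $|x|$ and hence $a^+$), so $a_{12}=\int_{\R^2} x_1 x_2 a^+(x)\,dx=0$; by symmetry of the matrix, $a_{21}=0$ as well. Next, the map $(x_1,x_2)\mapsto(x_2,x_1)$ also preserves $|x|$ and hence $a^+$, so a change of variables gives $a_{11}=\int x_1^2 a^+(x)\,dx = \int x_2^2 a^+(x)\,dx = a_{22}$. Adding these two equal expressions yields
\[
2 a_{11}=\int_{\R^2}(x_1^2+x_2^2)a^+(x)\,dx=\int_{\R^2}|x|^2 a^+(x)\,dx,
\]
which gives the claimed identity $a_{11}=a_{22}=\frac{1}{2}\int_{\R^2}|x|^2 a^+(x)\,dx$.

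Finally I would substitute these values into \eqref{eq:la2}. With $a_{12}=0$ and $a_{11}=a_{22}$,
\[
\sqrt{a_{11}a_{22}-a_{12}^2}=a_{11}=\frac{1}{2}\int_{\R^2}|x|^2 a^+(x)\,dx,
\]
so that
\[
\la_2=\frac{\kap}{2\pi\sqrt{a_{11}a_{22}-a_{12}^2}}=\frac{\kap}{2\pi\cdot\tfrac{1}{2}\int_{\R^2}|x|^2 a^+(x)\,dx}=\frac{\kap}{\pi\int_{\R^2}|x|^2 a^+(x)\,dx},
\]
which is the stated formula. The only mild subtlety is making sure the second-moment integral is finite, but this is guaranteed by assumption \eqref{eq:secmom}, so the proof is complete.
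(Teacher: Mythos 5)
Your computation is correct and is exactly the direct verification the paper intends (the corollary is stated without proof as an immediate consequence of the definition \eqref{eq:defofaplus} and the formula \eqref{eq:la2}). The symmetry arguments for $a_{12}=0$ and $a_{11}=a_{22}$ and the substitution into $\la_2$ are all sound, and the finiteness remark via \eqref{eq:secmom} is a nice touch.
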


\begin{remark}\label{rem:nondif2}
It can be checked that $F(\la,\eps)$ defined by \eqref{eq:defFleps2} is not continuously differentiable in $\eps$ at $\eps=0$ (even if we assume that $r$ is); hence, in general, one can not expect that $\la(\eps)$ is continuously differentiable at $\eps=0$. Hence, the question about the next  term of the assymptotic in \eqref{eq:asymptford2} remains open.
\end{remark}

\section{Asymptotics of the critical mortality: \texorpdfstring{$d=1$}{d=1}}\label{sec:asympd1}

Let $d=1$ and \eqref{eq:mcrchar} hold. 
We firstly proceed again heuristically. Similarly to the arguments at the beginning of Section~\ref{sec:asympd2}, if \eqref{eq:mcritconv} holds then we may expect, for $\eps\to0$,
\begin{gather}\label{eq:anz1}
q^*(\eps)\approx\eps l(\eps), \\
\shortintertext{where}
l(\eps)\sim p^*(\eps)\to\infty, \qquad \eps l(\eps)\to 0.\notag 
\end{gather}
Then, by \eqref{eq:pepsint}, the ansatz \eqref{eq:anz1} implies 
\begin{align*}
l(\eps)&\sim \frac{1}{ \kam   }
	\int_{\R}\frac{\hat{a}^+(\xi)-\eps  l(\eps)\, \hat{a}^-(\xi)}{ \kap       -\hat{a}^+(\xi)+\eps l(\eps)\, \hat{a}^-(\xi)}\hat{a}^-(\xi)\,d\xi\\&=\frac{1}{\kap \kam }
\int_{\R} \hat{a}^-(\xi)\,d\xi-
\frac{1}{\kam }
\int_{\R} \frac{1}{\kap -\bigl(\hat{a}^+ (\xi)- \eps l(\eps) \hat{a}^- (\xi)\bigr)}\hat{a}^-(\xi)\,d\xi.
\end{align*}

By the same arguments as above, the singularity of the latter expression as $\eps\to0$ is fully determined by the integral
\begin{align*}
&\quad
\frac{1}{\kam }
\int_{-\delta}^\delta \frac{1}{\kap -\bigl(\hat{a}^+ (\xi)- \eps l(\eps) \hat{a}^- (\xi)\bigr)}\hat{a}^-(\xi)\,d\xi\\
&\sim c_1 \cdot\int_0^\delta  \frac{1}{r^2+ c_2  \eps l(\eps)}\,dr=\frac{c_3}{\sqrt{\eps l(\eps)}}\arctan \frac{\delta}{\sqrt{c_2  \eps l(\eps)}},
\end{align*}
for small enough $\delta>0$ and some $c_1,c_2,c_3>0$; here we used \eqref{eq:Texpcf}. As a result, heuristically,
\[
	l(\eps)\sqrt{\eps l(\eps)}\approx \const,
\]
and hence $l(\eps)\approx \const \eps^{-\frac13}$, $\eps\to0$. Again, it gives us a hint to define $\la(\eps)$ in the proof of the following theorem.

\begin{theorem}\label{thm:assympd1}
Let $d=1$ and \eqref{eq:newA1}--\eqref{eq:secmom} hold. Let \eqref{eq:mcrchar} hold with $o(\eps)$ such that the function
\begin{equation}
r(\eps):=\eps^{-\frac{2}{3}}o(\eps) \label{r1eps1}
\end{equation}
is continuous for small $\eps>0$.  Then 
\begin{equation}
\label{eq:asymptford1}
q^*(\eps)=\la_1\eps^{\frac{2}{3}}+o\bigl(\eps^{\frac{2}{3}}\bigr),
\end{equation}
where
\begin{equation*}
\la_1:=\Biggl(\dfrac{(\kap)^2}{ 2\kam\displaystyle\int_\R x^2 a^+(x)\,dx}\Biggr)^{\frac13}.
\end{equation*}
\end{theorem}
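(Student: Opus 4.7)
I would follow the template of Theorems~\ref{thm:assympd3} and~\ref{thm:assympd2}: rescale $q^*(\eps)$, convert \eqref{eq:mcrchar} into the zero-set of an auxiliary function $F(\la,\eps)$, extend $F$ continuously to $\eps=0$, and close with the implicit function theorem. Concretely, I would set $\la(\eps):=\eps^{-2/3}q^*(\eps)$, extend $r$ from~\eqref{r1eps1} evenly with $r(0):=0$ (cf.~\eqref{eq:sarqr33434}), write $M:=\int_\R x^2 a^+(x)\,dx$, and, dividing \eqref{eq:mcrchar} by $\eps^{2/3}$ and using~\eqref{eq:pepsint}, recast the equation as $F(\la(\eps),\eps)=0$ with
\[
F(\la,\eps):=\frac{|\eps|^{1/3}}{\kam}\int_\R\frac{\hat{a}^+(\xi)-|\eps|^{2/3}\la\,\hat{a}^-(\xi)}{\kap-\hat{a}^+(\xi)+|\eps|^{2/3}\la\,\hat{a}^-(\xi)}\hat{a}^-(\xi)\,d\xi-\la-r(\eps),
\]
on $E_\delta:=(\la_1-\delta,\la_1+\delta)\times(-\delta,\delta)$, where $\delta>0$ is small enough to keep the non-negativity~\eqref{eq:defjeps} and an analogue of~\eqref{eq:condondelta3} valid throughout.

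The main step is to identify $\lim_{\eps\to0}F(\la,\eps)$. Using $\hat{J}_\eps/(\kap-\hat{J}_\eps)=\kap/(\kap-\hat{J}_\eps)-1$, the $-1$ piece contributes $-|\eps|^{1/3}\kam^{-1}\int\hat{a}^-\,d\xi\to 0$. In the remaining integral I would split at $|\xi|=\delta'$: the tail is of order $|\eps|^{1/3}$ by the uniform positivity that gave~\eqref{eq:estextball}, while the principal part is treated by the $d=1$ Taylor expansion $\kap-\hat{a}^+(\xi)=2\pi^2 M\xi^2+o(\xi^2)$, $\hat{a}^-(\xi)=\kam+o(1)$, together with the substitution $\xi=|\eps|^{1/3}u$. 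The prefactor $|\eps|^{1/3}$ exactly cancels the Jacobian, and after bracketing the rescaled integrand between two $\eps$-independent, $u$-integrable expressions in the spirit of~\eqref{eq:dblestsigma}, dominated convergence yields
\[
\lim_{\eps\to0}F(\la,\eps)=\frac{\kap}{\kam}\int_\R\frac{\kam\,du}{2\pi^2 Mu^2+\la\kam}-\la=\frac{\kap}{\sqrt{2\kam M\la}}-\la=:F(\la,0).
\]
The equation $F(\la,0)=0$ is the cubic $\la^3=(\kap)^2/(2\kam M)$, whose unique positive root is precisely $\la_1$.

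To apply the implicit function theorem I would then differentiate under the integral,
\[
\partial_\la F(\la,\eps)=-1-\frac{\eps\,\kap}{\kam}\int_\R\frac{\bigl(\hat{a}^-(\xi)\bigr)^2}{\bigl(\kap-\hat{a}^+(\xi)+|\eps|^{2/3}\la\hat{a}^-(\xi)\bigr)^2}\,d\xi,
\]
and observe that, under the same rescaling $\xi=|\eps|^{1/3}u$, the squared denominator contributes $|\eps|^{-4/3}$ and the Jacobian contributes $|\eps|^{1/3}$, so the integral grows like $|\eps|^{-1}$ and the outer prefactor $\eps$ balances it exactly; a second dominated-convergence argument then gives $\partial_\la F(\la,0)=-1-\tfrac{\kap}{2}(2\kam M)^{-1/2}\la^{-3/2}$, which is continuous and non-zero on $E_\delta$, equalling $-3/2$ at $\la_1$. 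The implicit function theorem thus produces a unique continuous $\la(\eps)$, $\eps\in(-\delta,\delta)$, with $\la(0)=\la_1$ and $F(\la(\eps),\eps)=0$; translating back via $q^*(\eps)=\eps^{2/3}\la(\eps)$ gives~\eqref{eq:asymptford1}. The principal technical difficulty, as in the proof of Theorem~\ref{thm:assympd2}, lies in controlling the Taylor remainders uniformly over the rescaled domain $|u|\leq\delta'|\eps|^{-1/3}$, which swells to $\R$ as $\eps\to0$; this is addressed by sharpening the two-sided bounds~\eqref{eq:estorig} to produce an integrable $\la$-independent majorant of the form $C/(u^2+1)$ before dominated convergence can be invoked.
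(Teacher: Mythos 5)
Your proposal is correct and follows essentially the same route as the paper: the rescaling $\la(\eps)=\eps^{-2/3}q^*(\eps)$, the even extension of $r$, the same auxiliary $F(\la,\eps)$ with limit $F(\la,0)=-\la+\kap/\sqrt{2\la\kam M}$, the computation $\partial_\la F(\la_1,0)=-3/2$, and the implicit function theorem. The only (immaterial) difference is that you evaluate the limiting integrals by the substitution $\xi=|\eps|^{1/3}u$ plus dominated convergence with a $C/(u^2+1)$ majorant, whereas the paper integrates explicitly and passes to the limit in the resulting $\arctan$ expressions.
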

\begin{remark}
Note that, in contrast to the cases $d\geq 3$ and $d=2$, cf.~Remarks~\ref{eq:remindecoef}, \ref{eq:remindecoef2}, $\la_1$ depends effectively on (the ratio of) $\kap$ and $\kam$.
\end{remark}
\begin{proof}
We set $\la(\eps):=q^*(\eps)\eps^{-\frac{2}{3}}$ for $\eps\in(0,1)$,
and then rewrite  \eqref{eq:mcrchar} as follows
\[
	\la(\eps)-\frac{\eps^{\frac{1}{3}}}{ \kam  }
	\int_\R\frac{\hat{a}^+(\xi)-  \eps^{\frac{2}{3}}\la(\eps)\,   \hat{a}^-(\xi)}{ \kap       -\hat{a}^+(\xi)+\eps^{\frac{2}{3}}\la(\eps)\, \hat{a}^-(\xi))}\hat{a}^-(\xi)\,d\xi+r(\eps)=0,
\]
where $r(\eps)\to0$, $\eps\to0$, is given by \eqref{r1eps1}

Let $\delta\in\bigl(0,\min\{\la_1,1\}\bigr)$ be such that, cf. \eqref{eq:qepsbdd}, \eqref{eq:condondelta3}, \eqref{eq:condondelta},
\begin{equation*}
(\la_1+\delta)\delta^{\frac{2}{3}}<\frac{\kap -m}{\kam },
\end{equation*}
and let also $r(\eps)$ be continuous on $(0,\delta)$. We define then $r$ on $(-\delta,0]$ as in \eqref{eq:sarqr33434}.
For $\la\in (\la_1- \delta,\la_1 +\delta)$, $\eps\in (- \delta,\delta)$, $\eps\neq0$,
we consider the function
\begin{align}
F(\la,\eps):&=\frac{\eps^{\frac{1}{3}}}{ \kam  }
	\int_{\R^2}\frac{\hat{a}^+(\xi)-\la \eps^{\frac{2}{3}} \, \hat{a}^-(\xi)}{ \kap       -\hat{a}^+(\xi)+\la  \eps^{\frac{2}{3}} \, \hat{a}^-(\xi)}\hat{a}^-(\xi)\,d\xi-\la -r(\eps)\notag\\
	&=\frac{\kap\eps^{\frac{1}{3}}}{\kam}
	\int_{\R^2}\frac{1}{ \kap       -\hat{a}^+(\xi)+\la  \eps^{\frac{2}{3}}\, \hat{a}^-(\xi)}\hat{a}^-(\xi)\,d\xi \notag\\&\quad -\frac{\eps^{\frac{1}{3}}}{  \kam }	\int_{\R^2} \hat{a}^-(\xi)\,d\xi-\la -r(\eps).\label{eq:defFleps1}
\end{align}
Clearly, $F$ is continuous for $\la\in (\la_1- \delta,\la_1+\delta)$, $\eps\in (- \delta,\delta)$, $\eps\neq0$.

Let 
\[
B:=\int_\R |x|^2 a^+(x)\,dx=2\int_0^\infty x^2 a^+(x)\,dx.
\]
By \eqref{eq:Texpcf} and the same arguments as in the proof of Theorem~\ref{thm:assympd2},
\begin{align*}
\lim_{\eps\to0}F(\la,\eps)&=-\la+\frac{\kap}{\kam }\lim_{\eps\to0}\eps^{\frac{1}{3}}\int_{-\delta}^\delta\frac{1}{ \kap       -\hat{a}^+(\xi)+\la  \eps^{\frac{2}{3}}\, \hat{a}^-(\xi)}\hat{a}^-(\xi)\,d\xi\\
&=-\la+2\kap\lim_{\eps\to0}\eps^{\frac{1}{3}}\int_{0}^\delta\frac{1}{ 2\pi^2 B r^2+\kam\la  \eps^{\frac{2}{3}}}\,dr\\
&= - \la +\frac{\sqrt{2}\kap}{\pi\sqrt{\la \kam B}}\lim_{\eps\to0}
\arctan\frac{\sqrt{2B}\pi \delta}{\sqrt{\kam  \la}\eps^{\frac13}}\\
&= - \la +\frac{\kap}{\sqrt{2\la \kam B}}.
\end{align*}

Therefore, if we set
\[
F(\la,0):= - \la +\frac{\kap}{\sqrt{2\la \kam B}}, \qquad\la\in(\la_1- \delta,\la_1+\delta),
\]
then $F(\la,\eps)$ becomes a continuous function on
\[
E_\delta:=(\la_1- \delta,\la_1 +\delta)\times (- \delta,\delta)
\]
with a small enough $\delta\in(0,\la_1)$. Moreover, it is straightforward to check that 
\[
F(\la_1,0)=0.
\]

For $\la\in(\la_1- \delta,\la_1+\delta)$, we have
\[
\frac{\partial}{\partial \la}F(\la,0)=\lim_{h\to0}\frac{F(\la+h,0)-F(\la,0)}{h}=-1-\frac{\kap}{2\sqrt{2 \kam B}}\la^{-\frac{3}{2}}< 0,
\]
and also
\[
	\frac{\partial}{\partial \la}F(\la_1,0)=
	-1-\frac{\kap}{2\sqrt{2 \kam B}}
	\Bigl((\kap)^{\frac23}(2\kam B)^{-\frac13}\Bigr)^{-\frac{3}{2}}=-\frac32\neq0.
\]

Next, for $(\la,\eps)\in E_\delta$, $\eps\neq0$, we have, by \eqref{eq:defFleps1},
\[
\frac{\partial}{\partial \la}F(\la,\eps)=-1- \frac{ \eps }{ \kam   }
	\int_{\R}\frac{ \bigl(\hat{a}^-(\xi)\bigr)^2}{ \bigl(\kap       -\hat{a}^+(\xi)+\la  \eps^{\frac{2}{3}}\, \hat{a}^-(\xi)\bigr)^2}\,d\xi.
\]
By the same arguments as above,
\begin{align*}
 \lim_{\eps\to0}
\frac{\partial }{\partial \la}F(\la,\eps)&
=-1-\frac{\kap}{ \kam   }
\lim_{\eps\to0}  \eps  
	\int_{-\delta}^\delta
	\frac{\bigl(\hat{a}^-(\xi)\bigr)^2}{  \bigl(\kap       -\hat{a}^+(\xi)+\la  \eps^{\frac{2}{3}}\, \hat{a}^-(\xi)\bigr)^2}\,d\xi\\
	&=	-1-\kap\kam
\lim_{\eps\to0}  \eps  
	\int_{-\delta}^\delta
	\frac{1}{  \bigl(2\pi^2 B|\xi|^2+\la \kam  \eps^{\frac{2}{3}} \bigr)^2}\,d\xi\\
	\intertext{and by straightforward integration, one gets}
	&= -1- \kap
\lim_{\eps\to0} 
\frac{\delta \eps^{\frac13}}{2\pi^2 B \delta^2 \kam \la +\eps^{\frac23}(\kam)^2\la^2}\\&\quad 
-\kap
\lim_{\eps\to0}\frac{1}{\pi \sqrt{2B\kam}\la^{\frac32}}\arctan\biggl(\sqrt{\frac{2 B}{\kam\la}}\pi \delta \eps^{-\frac13}\biggr)
\Biggr)\\
&=-1-\frac{\kap}{2\sqrt{2B\kam}}\la^{-\frac32}=\frac{\partial}{\partial \la}F(\la,0).
\end{align*}
Therefore, $\frac{\partial }{\partial \la}F$ is continuous on $E_\delta$.

Again, the implicit function theorem states that there exists a unique continuous function $\la=\la(\eps)$  such that $\la(0)=\la_1$ and $F(\la(\eps),\eps)=0$, $\eps\in(-\delta,\delta)$ $\eps\in(-\delta,\delta)$ (with, possibly, smaller $\delta$). 
Therefore, $\la(\eps)=\la_1+o(1)$, $\eps\to0$, that yields \eqref{eq:asymptford1}.
\end{proof}

\begin{remark}
Similarly to the case $d=2$, see Remark~\ref{rem:nondif2}, function $F(\la,\eps)$ defined by \eqref{eq:defFleps1} is not continuously differentiable in $\eps$ at $\eps=0$, hence the next  term of the assymptotic in \eqref{eq:asymptford1} remains an open problem.
\end{remark}

\section*{Acknowledgements}%
\addcontentsline{toc}{section}{\numberline{}Acknowledgements}%

I thank Otso Ovaskainen and Panu Somervuo for discussions that motivated the analyses presented in this paper.

\def\cprime{$'$}

\end{document}